\def\set#1{ \{#1\}}
\newcommand\free{{\mathfrak F}}
\newcommand\ide{{1'}}
\newcommand\comp{\mathbin{;}}
\newcommand\meet{\mathbin{\cdot\,}}
\newcommand\join{\mathbin{+}}
\newcommand\path{{\rm path}}
\newcommand\Lang{{\sf L}}
\newcommand\Rel{{\sf R}}
\newcommand\R{{\sf R}}
\newcommand\V{{\sf V}}
\newcommand\K{{\sf K}}
\newtheorem{theorem}{Theorem}[section]
\newtheorem{lemma}[theorem]{Lemma}
\newtheorem{corollary}[theorem]{Corollary}
\newtheorem{rem}[theorem]{Remark}
\newtheorem{problem}[theorem]{Problem}
\newtheorem{defin}[theorem]{Definition}
\newenvironment{definition}{\begin{defin}\rm}{\end{defin}}
\newenvironment{remark}{\begin{rem}\rm}{\end{rem}}
\title[Ordered Monoids]
{Ordered Monoids: Languages and Relations}
\author{Szabolcs Mikul\'as}
\address{Department of Computer Science and Information Systems\\
Birkbeck, University of London\\
%
szabolcs@dcs.bbk.ac.uk
}
\date{}
\begin{document}

\maketitle

\begin{abstract}
We give a finite axiomatization for
the variety generated by 
relational, integral ordered monoids.
As a corollary we get a finite axiomatization for
the language interpretation as well.
\\
{\sc Keywords:}
finite axiomatizability, algebras of relations, language algebras
\end{abstract}

\section{Introduction}

We will focus on algebras of similarity type
$\Lambda\subseteq (\join,\meet,\comp,0,\ide)$
where $\join,\meet,\comp$ are binary operations and $0,\ide$ are constants.
We will consider two types of representations:
as families of binary relations and as families of languages.
Our main concern is whether the varieties generated by these
interpretations are finitely axiomatizable.

\begin{definition}[Language algebras]
Let $\mathfrak{A}=(A,\Lambda)$ be an algebra of similarity type 
$\Lambda\subseteq (\join,\meet,\comp,0,\ide)$.
We say that $\mathfrak{A}$ is a \emph{language algebra} if
the following holds.
There is an alphabet $\Sigma$ such that
$A$ is a family of languages over $\Sigma$,
i.e., $A\subseteq\wp(\Sigma^*)$ where $\Sigma^*$ denotes the
set of words (finite strings) over $\Sigma$,
and the operations in $\Lambda$ are interpreted as follows:
join $\join$ is union, meet $\meet$ is intersection,
composition $\comp$ is concatenation
\begin{equation*}
a\comp b=\{ st: s\in a\mbox{ and } t\in b\}
\end{equation*}
$0$ is the empty language $\emptyset$ and
$\ide$ is the singleton language 
consisting of the empty word. 
\end{definition}

We will denote the class of language algebras of similarity type $\Lambda$ by
$\Lang(\Lambda)$.

\begin{definition}[Relation algebras]
Let $\mathfrak{A}=(A,\Lambda)$ be an algebra of similarity type 
$\Lambda\subseteq (\join,\meet,\comp,0,\ide)$.
We say that $\mathfrak{A}$ is a \emph{relation algebra} if
the following holds.
There is a set $U$, the \emph{base} of $\mathfrak{A}$, such that
$A$ is a family of binary relations on $U$,
i.e., $A\subseteq \wp(U\times U)$, 
and the operations in $\Lambda$ are interpreted as follows:
join $\join$ is union, 
meet $\cdot$ is intersection,
composition $\comp$ is relation composition
\begin{align*}
a\comp b&=\{(u,v)\in U\times U: 
(u,w)\in a \mbox{ and } (w,v)\in b \mbox{ for some } w\}\\
\intertext{$\ide$ is the identity relation on $U$}
\ide&=\{(u,v)\in U\times U: u=v\}
\end{align*}
and $0$ is the empty relation $\emptyset$.
\end{definition}

We will denote the class of relation algebras of similarity type $\Lambda$ by
$\Rel(\Lambda)$.

In passing we note that the representation classes
$\Lang(\Lambda)$ and $\Rel(\Lambda)$ are not finitely axiomatizable
whenever $(\meet,\comp,\ide)\subseteq \Lambda$.
Indeed, the class of language algebras is not closed under products (see below),
while the quasivariety $\Rel(\meet,\comp,\ide)$ has no finite base
\cite{HM-rep-07}.
This is one of the main reasons why we concentrate on the generated varieties below.

Assume that $0,\ide\in \Lambda$ and let $\mathfrak{A}$ be a $\Lambda$-algebra.
We say that $\mathfrak{A}$ is \emph{integral} if $\ide$ is an atom of $\mathfrak{A}$,
i.e., $\ide$ is a minimal non-zero element.
For a class $\K(\Lambda)$ of $\Lambda$-algebras,
let $\K^i(\Lambda)$ denote the subclass of integral $\Lambda$-algebras.
Observe that every language algebra is integral, $\Lang(\Lambda)=\Lang^i(\Lambda)$, 
while there are non-integral relation algebras, $\Rel(\Lambda)\supset\Rel^i(\Lambda)$.

For a class $\K(\Lambda)$ of $\Lambda$-algebras,
let $\V(\K(\Lambda))$ denote the variety generated by $\K(\Lambda)$.
Note that the variety $\V(\K^i(\Lambda))$ generated by integral algebras
may contain non-integral algebras,
since neither products nor homomorphisms preserve the property that $\ide$ is an atom.

\section{Main results}

First we look at 
$\Lambda=(\meet,\comp,0,\ide)$.
As usual $x\le y$ is defined by $x\meet y=x$ and we assume that
$\comp$ binds closer than $\meet$, e.g., we write $x\meet y\comp z$
for $x\meet(y\comp z)$.

We define $\mbox{\rm Ax}(\meet,\comp,0,\ide)$ 
as the collection of the following axioms.
\begin{itemize}
\item[]
Semilattice axioms (for $\meet$)
\item[]
Monoid axioms (for $\comp$ and $\ide$)
\item[]
Monotonicity:
\begin{equation}\label{eq:mon}
(x\meet x')\comp (y\meet y')\le x\comp y
\end{equation}
\item[]
Axioms for $0$:
\begin{align}
0&=0\meet x\\
0&=0\comp x= x\comp 0
\end{align}
\item[]
Subidentity axioms:
\begin{align}
(\ide\meet x)\comp(\ide\meet y)&=\ide\meet x\meet y \label{eq:fun}\\
(\ide\meet x)\comp(y\meet z)&=(\ide\meet x)\comp y \meet z \label{eq:fun1l}\\
(x\meet y)\comp(\ide\meet z)&=x \meet y\comp(\ide\meet z) \label{eq:fun1r}
\end{align}
\end{itemize}
It is easily checked that all these axioms are valid in both relation and language algebras.

We will need additional axioms that are valid in language algebras
and in integral relation algebras.
We define $\mbox{\rm Ax}^i(\meet,\comp,0,\ide)$ as
$\mbox{\rm Ax}(\meet,\comp,0,\ide)$ augmented with the integral axioms~\eqref{eq:int1}
and~\eqref{eq:int2} below.
\begin{itemize}
\item[]
``Integrality'':
\begin{align}
\ide\meet x\comp y &= \ide\meet y\comp x \label{eq:int1} \\
(\ide\meet x)\comp y&= y\comp(\ide\meet x) \label{eq:int2}
\end{align}
\end{itemize}
Note that $\ide\meet x\comp y=0$ iff $\ide\meet y\comp x=0$
in a relation algebra $\mathfrak{A}$.
Hence, if $\mathfrak{A}$ is integral, then it follows that the two terms in~\eqref{eq:int1}
are either $0$ or $\ide$ at the same time.
Checking validity of the other integral axiom in integral algebras is similar.
The set $\mbox{\rm Ax}^i(\meet,\comp,0,\ide)$ is not independent,
e.g., we can derive~\eqref{eq:fun1r} from~\eqref{eq:fun1l} with the use of~\eqref{eq:int2}.

\subsection{Axiomatizations for integral relation algebras}


\begin{theorem}\label{thm:main}
The variety $\V(\Rel^i(\meet,\comp,0,\ide))$ generated by 
$\Rel^i(\meet,\comp,0,\ide)$
is axiomatized by $\mbox{\rm Ax}^i(\meet,\comp,0,\ide)$.
\end{theorem}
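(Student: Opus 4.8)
The plan is to prove the two inclusions separately. Soundness---that every algebra in $\Rel^i(\meet,\comp,0,\ide)$ satisfies $\mbox{\rm Ax}^i(\meet,\comp,0,\ide)$, and hence so does the generated variety---is the routine direction, and the excerpt has already checked validity of the axioms in integral relation algebras. The substantive direction is completeness: I must show that every $\Lambda$-algebra $\mathfrak{A}$ satisfying $\mbox{\rm Ax}^i(\meet,\comp,0,\ide)$ lies in $\V(\Rel^i(\meet,\comp,0,\ide))$. Since a variety is closed under $\homo$, $\sub$ and $\pro$, it suffices to represent $\mathfrak{A}$ as a subalgebra of a product of integral relation algebras; equivalently, I will show that for each pair of distinct elements (or each nonzero element) there is a homomorphism into an integral relation algebra separating them. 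So the real goal is a representation theorem: given $\mathfrak{A}\models\mbox{\rm Ax}^i$ and elements $a\not\le b$, construct a base set $U$, an integral relation algebra on $U$, and a homomorphism $h\colon\mathfrak{A}\to\wp(U\times U)$ with $h(a)\not\subseteq h(b)$.

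The approach I would take to build the representation is a step-by-step network (or ``mosaic'') construction. First I would pass to a suitable prime-filter-style index: the subidentity axioms~\eqref{eq:fun}, \eqref{eq:fun1l}, \eqref{eq:fun1r} say that the subidentities $\ide\meet x$ behave like a meet-semilattice of ``domain restrictions'' that commute past composition, so the elements below $\ide$ organize the base points. Meanwhile the integrality axioms~\eqref{eq:int1} and~\eqref{eq:int2} force $\ide\meet x\comp y$ to be $0$ or an atom-like object and make the subidentities central (commuting with everything), which is exactly what collapses the ``diagonal'' structure of a general relation algebra down to the integral case. I would take the base $U$ to consist of finite sequences (labelled paths) witnessing nonzero compositions, defining $h(c)$ to be the set of pairs $(u,v)$ of points whose connecting label is forced to lie below $c$. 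The monotonicity axiom~\eqref{eq:mon} together with the monoid axioms guarantees $h$ respects $\comp$, the semilattice axioms handle $\meet$, and the $0$-axioms handle the bottom element.

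The key technical device is a back-and-forth saturation: starting from a finite labelled network that realizes the witness $a\not\le b$, I repeatedly close it under the demands imposed by $\comp$ (every edge labelled by something below $x\comp y$ must be splittable through an intermediate point) and under the meet/order structure, taking a union over the resulting chain to obtain a representation on a possibly infinite base. Integrality is maintained throughout because axioms~\eqref{eq:int1}--\eqref{eq:int2} ensure that identity loops at a point are labelled exactly by $\ide$ and nothing strictly between $0$ and $\ide$, so $\ide$ is represented by the genuine diagonal and remains an atom; this is what places the constructed algebra in $\Rel^i$ rather than merely $\Rel$.

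The hard part will be verifying that the composition triangles can always be completed while keeping labels consistent---i.e., that when an edge is required to split through a new vertex, the two new labels can be chosen below suitable factors without creating a contradiction with labels already present elsewhere in the network. This is where axioms~\eqref{eq:fun1l} and~\eqref{eq:fun1r}, which let one push a subidentity in and out of a composition without changing the value, do the essential work, and where the integrality axioms are needed to reconcile the ``forward'' and ``backward'' witnesses $\ide\meet x\comp y$ and $\ide\meet y\comp x$. Getting the bookkeeping of this saturation correct, and proving that the limit network yields a well-defined homomorphism separating $a$ from $b$, is the crux of the argument; everything else is a matter of checking the individual operations against the corresponding axioms.
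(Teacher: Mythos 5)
Your soundness half matches the paper, and the machinery you describe (a step-by-step construction of a labelled network whose labels are filters, with the subidentity and integrality axioms keeping everything on a single block) is the right machinery. The gap is in the very first reduction of your completeness half. You reduce the theorem to: \emph{every} abstract model $\mathfrak{A}$ of $\mbox{\rm Ax}^i(\meet,\comp,0,\ide)$ embeds into a product of integral relation algebras, i.e.\ the class of models of the axioms equals $\iso\sub\pro(\Rel^i(\meet,\comp,0,\ide))$. That is a strictly stronger statement than Theorem~\ref{thm:main}: it would finitely axiomatize the \emph{representation class} (a quasivariety), not merely the generated variety. The paper deliberately avoids this; it recalls that representation classes in these signatures are not finitely axiomatizable (citing \cite{HM-rep-07} for the quasivariety $\Rel(\meet,\comp,\ide)$) and explains that this is precisely why it works with generated varieties. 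What suffices, and what the paper actually proves (Lemma~\ref{lem:wit}), is the equation-level statement: for every inequality $\theta\le\theta'$ \emph{not derivable} from $\mbox{\rm Ax}^i(\meet,\comp,0,\ide)$ there is one algebra $\mathfrak{A}_\theta\in\Rel^i(\meet,\comp,0,\ide)$ in which it fails. Since a variety is determined by its equational theory (equivalently: it is enough to represent the free algebra of the axiomatically defined variety, every other model being a homomorphic image of a free one, and $\homo$-closure costs nothing), this weaker statement closes the proof. Your plan never restricts to free algebras or to non-derivable equations, so it is committed to the stronger claim.

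This is not a harmless rephrasing, because the step-by-step construction genuinely needs the algebra to be free. In the paper the labels are filters of $\mathfrak{F}_X$, whose elements are terms, and the crux you yourself flag --- keeping labels consistent when an edge is split --- is exactly the condition Ide (the identity may label only reflexive edges), established as \eqref{eq:ind2} inside Lemma~\ref{lem:w}. That proof is a double induction, on term complexity and on the distance of an edge from the witness edges, ending in a bootstrap: if $\ide$ lay in the label of an irreflexive edge $(x,y)$, then $\epsilon\comp\rho_1\comp\rho_2\comp\epsilon\le\ide$ would be \emph{derivable}, hence (by soundness) true in the relation algebra $\mathfrak{A}_\theta$ being built, contradicting $\epsilon\comp\rho_1\comp\rho_2\comp\epsilon\in\mathfrak{A}_\theta(x,y)$, which the term induction has already secured. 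Both ingredients --- induction over the syntactic structure of elements, and the appeal to derivability --- are unavailable in an arbitrary model of the axioms, and your sketch offers no substitute at the point you yourself call the hard part. (A smaller symptom of the same issue: the identity loops are not labelled ``exactly by $\ide$'' but by the filter $\mathcal{E}$ of subidentities fixing $\theta$; integrality of $\mathfrak{A}_\theta$ comes from the uniformity $\mathfrak{A}_\theta(u,u)=\mathfrak{A}_\theta(v,v)$ of Lemma~\ref{lem:int}, not from the loop labels.) If you restate your goal as the paper does --- construct a witness $\mathfrak{A}_\theta$ refuting each non-derivable $\theta\le\theta'$ --- your outline becomes, in substance, the paper's proof; as written, it aims at a claim the paper does not prove and that may well be false.
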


\begin{proof}
By the validity of the axioms in integral relation algebras
we get that equations derivable using equational logic from $\mbox{\rm Ax}^i(\meet,\comp,0,\ide)$ 
are valid in $\V(\Rel^i(\meet,\comp,0,\ide))$.
We will prove the other direction by showing that, for every non-derivable equation, 
there is an algebra in~$\Rel^i(\meet,\comp,0,\ide)$ 
witnessing that the equation is not valid,
see Lemma~\ref{lem:wit}.
\end{proof}

Next we include union into the signature.
Define $\mbox{\rm Ax}^i(\join,\meet,\comp,0,\ide)$
as $\mbox{\rm Ax}^i(\meet,\comp,0,\ide)$
augmented with the distributive lattice axioms (for $\meet$ and $\join$)
and that the operation $\comp$ is additive:
\begin{align}
(x\join y)\comp z&= x\comp z \join y\comp z\\
x\comp (y\join z)&= x\comp y \join x\comp z
\end{align}
for every $x,y,z$.

\begin{theorem}\label{thm:dl}
The variety $\V(\Rel^i(\join,\meet,\comp,0,\ide))$ generated by 
$\Rel^i(\join,\meet,\comp,0,\ide)$
is axiomatized by $\mbox{\rm Ax}^i(\join,\meet,\comp,0,\ide)$.
\end{theorem}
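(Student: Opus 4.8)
The plan is to reduce the completeness half of Theorem~\ref{thm:dl} to the join-free case already settled in Theorem~\ref{thm:main}. Soundness is routine: the distributive lattice laws and the two additivity equations for $\comp$ hold in every algebra of binary relations, so $\V(\Rel^i(\join,\meet,\comp,0,\ide))\subseteq\mathrm{Mod}(\mbox{\rm Ax}^i(\join,\meet,\comp,0,\ide))$. For the converse it suffices, by Birkhoff's theorem, to produce for every equation not derivable from $\mbox{\rm Ax}^i(\join,\meet,\comp,0,\ide)$ a witnessing algebra in $\Rel^i(\join,\meet,\comp,0,\ide)$ in which it fails.

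First I would put terms in normal form. Using distributivity and the additivity of $\comp$ on both arguments---all available in $\mbox{\rm Ax}^i(\join,\meet,\comp,0,\ide)$---every term is provably equal to a finite join $p_1\join\cdots\join p_m$ of join-free terms, i.e.\ terms over $(\meet,\comp,0,\ide)$. An equation is derivable iff the two inequalities between these normal forms are, and since $p_1\join\cdots\join p_m\le c$ is provably equivalent to the conjunction of the $p_i\le c$, the whole problem reduces to inequalities of the shape $p\le q_1\join\cdots\join q_n$ with $p$ and all $q_j$ join-free. Thus I may assume $p\le q_1\join\cdots\join q_n$ is not derivable and must falsify it in some integral relation algebra with join.

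The heart of the argument is to feed this back into Lemma~\ref{lem:wit}. The intended point is that the witness produced there for a join-free term is \emph{tight}: for a join-free $p$ it yields an integral relation algebra together with a distinguished pair $x$ lying in the value of $p$, such that for \emph{every} join-free term $q$ the pair $x$ lies in $q$ exactly when $p\le q$ is derivable from $\mbox{\rm Ax}^i(\meet,\comp,0,\ide)$. Granting this, the join case follows at once: since join is interpreted as union, $x$ lies in $q_1\join\cdots\join q_n$ iff it lies in some $q_j$, hence iff $p\le q_j$ is derivable for some $j$; but any such derivation would yield $p\le q_1\join\cdots\join q_n$ by monotonicity of join, contradicting non-derivability. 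Therefore $x$ witnesses $p\not\le q_1\join\cdots\join q_n$ in an integral relation algebra, and closing this algebra under union (which, in the single-square representations used, keeps $\ide$ an atom) supplies the required member of $\Rel^i(\join,\meet,\comp,0,\ide)$.

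The main obstacle is exactly the tightness claim. Theorem~\ref{thm:main} as stated only guarantees, for each non-derivable inequality separately, \emph{some} falsifying integral algebra, whereas here I need a single canonical model whose distinguished pair avoids all $n$ terms $q_j$ simultaneously. Simply juxtaposing $n$ separate witnesses will not work, since the pair realizing $p\setminus q_1$ may well sit inside $q_2$. I would therefore revisit the construction underlying Lemma~\ref{lem:wit} and verify that it in fact builds a \emph{universal} model for $p$---tight for all $q$ at once---rather than an ad hoc witness; this is plausible because such constructions are typically minimal models generated by the pair $x$, in which membership of $x$ in a join-free term forces a corresponding derivation. A secondary point to check is that closing the witness under union creates no proper sub-diagonal, so that integrality is preserved; this should follow from the unit being a single square.
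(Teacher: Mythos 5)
Your proposal is correct, and it reaches the crucial step by a genuinely different route than the paper. Both arguments begin identically: soundness is routine, and distributivity plus additivity reduce everything to a non-derivable inequality $p\le q_1\join\dots\join q_n$ with $p$ and the $q_j$ join-free. At that point the paper does \emph{not} re-enter the step-by-step construction: it invokes the term-graph method of \cite{AB-equ-95} (following \cite[Corollary~2]{bre:93}, see also \cite[Proposition~4.4]{AM-axi-11}) to show that such an inclusion is valid in $\V(\Rel^i(\join,\meet,\comp,0,\ide))$ iff $p\le q_j$ is valid for some $j$, and then applies Theorem~\ref{thm:main} as a black box. You instead exploit the internal structure of the witness behind Lemma~\ref{lem:wit}, and the ``tightness'' property you flagged as the main obstacle is in fact exactly what that construction delivers: the label $\ell_\omega(u_0,v_0)=\mathcal{F}(p)$ is never enlarged at later stages, and Lemmas~\ref{lem:w} and~\ref{lem:w1} together give $\mathfrak{A}_p(u_0,v_0)=\ell_\omega(u_0,v_0)=\{q:\mbox{\rm Ax}^i(\meet,\comp,0,\ide)\vdash p\le q\}$, which is your universality claim verbatim (the side condition $\mbox{\rm Ax}^i(\meet,\comp,0,\ide)\not\vdash p=0$ needed to run the construction is automatic, since otherwise $p\le q_1$ would be derivable). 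Your closing step is also sound, though your stated reason (``single square'') is not quite the operative one: the union-closure $\mathfrak{B}$ of $\mathfrak{A}_p$ consists of finite unions of elements of $\mathfrak{A}_p$, because $\comp$ and $\meet$ distribute over union for relations; hence any subidentity element of $\mathfrak{B}$ is a finite union of subidentity elements of $\mathfrak{A}_p$, each of which is $0$ or $\ide$ by integrality of $\mathfrak{A}_p$, so $\mathfrak{B}\in\Rel^i(\join,\meet,\comp,0,\ide)$ and the distinguished pair still separates $p$ from $q_1\join\dots\join q_n$. As for what each approach buys: the paper's argument is shorter given the literature it cites, while yours is self-contained modulo Section~3 of the paper and, as a by-product, re-proves the join-primality fact (validity of a join inequality decomposes disjunct by disjunct) for the integral case directly from the canonical-model property, rather than importing it.
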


\begin{proof}
This follows from \cite[Corollary~2]{bre:93}
(see also \cite[Proposition~4.4]{AM-axi-11}), we just give a sketch here.

By the validity of the axioms we get that derivable equations are valid.
For the other direction assume that $\V(\Rel^i(\join,\meet,\comp,0,\ide))\models a = b$.
Thus we have both $\V(\Rel^i(\join,\meet,\comp,0,\ide))\models a\le b$
and $\V(\Rel^i(\join,\meet,\comp,0,\ide))\models b\le a$, where
$x \le y$ is defined by $x\meet y=x$.
Note that, using the additivity of the operations in relation algebras, 
we can rewrite every term as a join of join-free terms. 
Thus $a\le b$ can be equivalently rewritten as 
$a_1\join\dots\join a_n \le b_1\join\dots \join b_m$
where $a_i$ and $b_j$ do not contain $\join$.
Furthermore, using the term graphs of \cite{AB-equ-95} one can show that
$a_1\join\dots\join a_n\le b_1\join\dots \join b_m$ is valid in 
$\V(\Rel^i(\join,\meet,\comp,0,\ide))$
iff, for every $i$, there is $j$ such that $a_i\le b_j$ is valid in
$\V(\Rel^i(\join,\meet,\comp,0,\ide))$.
Thus $\V(\Rel^i(\meet,\comp,0,\ide))\models a_i\le b_j$
(since $a_i$ and $b_j$ do not contain $\join$).
By Theorem~\ref{thm:main} we get
$\mbox{\rm Ax}^i(\meet,\comp,0,\ide)\vdash a_i\le b_j$, whence
$\mbox{\rm Ax}^i(\join,\meet,\comp,0,\ide) \vdash a_i\le b_j$.
Since the distributive lattice axioms ensure that the ordering
$x\le y$ can be equivalently defined by either $x\meet y=x$ or $x\join y=y$,
we get $\mbox{\rm Ax}^i(\join,\meet,\comp,0,\ide) \vdash a_i\le b_1\join\dots \join b_m$.
Since this holds for every $i$, using the additivity axioms we get
$\mbox{\rm Ax}^i(\join,\meet,\comp,0,\ide) \vdash a\le b$.
By an identical argument we get
$\mbox{\rm Ax}^i(\join,\meet,\comp,0,\ide) \vdash b\le a$ as well, 
whence $\mbox{\rm Ax}^i(\join,\meet,\comp,0,\ide) \vdash a= b$,
as desired.
\end{proof}

\subsection{Axiomatization for language algebras}

\begin{theorem}\label{thm:lang}
The variety $\V(\Lang(\join,\meet,\comp,0,\ide))$ generated by 
$\Lang(\join,\meet,\comp,0,\ide)$
is axiomatized by 
\begin{equation}\label{eq:empty}
x\comp y\meet \ide = (x\meet\ide)\comp(y\meet\ide)
\end{equation}
together with
$\mbox{\rm Ax}^i(\join,\meet,\comp,0,\ide)$.
\end{theorem}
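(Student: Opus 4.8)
The plan is to mirror the proof of Theorem~\ref{thm:dl}: establish soundness, eliminate $\join$ to reduce to join-free inequalities, and then prove a join-free completeness result whose witnessing algebras are genuine language algebras, so that~\eqref{eq:empty} holds in them automatically.

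Soundness is routine. Every language algebra is integral and satisfies $\mbox{\rm Ax}^i(\join,\meet,\comp,0,\ide)$, as already noted. For~\eqref{eq:empty}, observe that $\ide=\{\varepsilon\}$ and that $st=\varepsilon$ forces $s=t=\varepsilon$; hence $x\comp y\meet\ide=\{\varepsilon\}$ precisely when $\varepsilon\in x$ and $\varepsilon\in y$, which is exactly $(x\meet\ide)\comp(y\meet\ide)$. For context, the map $L\mapsto\{(u,uv):v\in L\}$ embeds any language algebra into an integral relation algebra validating~\eqref{eq:empty}, so $\V(\Lang(\join,\meet,\comp,0,\ide))\subseteq\V(\Rel^i(\join,\meet,\comp,0,\ide))$; thus the $\mbox{\rm Ax}^i$ part is inherited from Theorem~\ref{thm:dl} and~\eqref{eq:empty} carries the only new content. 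In particular all the proposed axioms hold throughout $\V(\Lang(\join,\meet,\comp,0,\ide))$.

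For completeness I would argue as in Theorem~\ref{thm:dl}. Additivity of $\comp$ and the distributive lattice axioms let one rewrite $a=b$ as two inequalities between joins of join-free terms, and the term-graph calculus of \cite{AB-equ-95}, whose graph semantics of join-free terms applies equally to languages, reduces $a_1\join\dots\join a_n\le b_1\join\dots\join b_m$ to the statement that for every $i$ there is $j$ with $a_i\le b_j$ valid, where the $a_i,b_j$ are join-free. It therefore suffices to prove the join-free case: a join-free $a\le b$ valid in all language algebras is derivable from $\mbox{\rm Ax}^i(\meet,\comp,0,\ide)$ together with~\eqref{eq:empty}.

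The core is a join-free witness lemma, parallel to Lemma~\ref{lem:wit}: for each join-free $a\le b$ not derivable from $\mbox{\rm Ax}^i(\meet,\comp,0,\ide)$ together with~\eqref{eq:empty}, construct a language algebra in which $a\le b$ fails. One cannot simply reuse Lemma~\ref{lem:wit}, since that lemma witnesses non-derivability from the weaker $\mbox{\rm Ax}^i(\meet,\comp,0,\ide)$, whereas adjoining~\eqref{eq:empty} makes strictly more inequalities derivable; the witnessing algebra must itself satisfy~\eqref{eq:empty}, i.e.\ be an honest language algebra, in which the only subidentity languages are $\emptyset$ and $\{\varepsilon\}$. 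The natural approach is a canonical model in the style of \cite{AB-equ-95}: associate with the join-free term $a$ its labelled graph, linearize it into a word $w_a$ together with an assignment of languages to the variables for which $w_a$ lies in the interpretation of $a$, and note that non-derivability of $a\le b$ should block any label-preserving homomorphism from the graph of $b$ into this linearization, so that $w_a$ avoids the interpretation of $b$. Here~\eqref{eq:empty} is precisely the axiom guaranteeing the empty-word behaviour of concatenation that such a word model requires. The main obstacle is to prove that this canonical model validates exactly the derivable inequalities, which amounts to matching graph homomorphisms against syntactic derivations while controlling the interaction of $\ide$ and $0$ with $\meet$ and $\comp$ under~\eqref{eq:empty}. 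Granting the witness lemma, the join-elimination of the previous paragraph shows that every equation valid in all language algebras is derivable, so every model of the axioms lies in $\sub\pro(\Lang(\join,\meet,\comp,0,\ide))\subseteq\V(\Lang(\join,\meet,\comp,0,\ide))$; together with soundness this yields the stated axiomatization.
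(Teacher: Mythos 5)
There is a genuine gap: your argument is a plan whose decisive step is never carried out. The join-free witness lemma --- for every join-free $a\le b$ not derivable from $\mbox{\rm Ax}^i(\meet,\comp,0,\ide)$ together with \eqref{eq:empty}, a language algebra refuting $a\le b$ --- is the entire technical content of the theorem, and you leave it at ``the natural approach is a canonical model\dots{} the main obstacle is to prove that this canonical model validates exactly the derivable inequalities''. Naming the obstacle is not overcoming it. Worse, the step you treat as routine is also unjustified: you claim the term-graph calculus of \cite{AB-equ-95} ``applies equally to languages'', but that calculus characterizes \emph{relational} validity of join-free inequalities, and language validity is strictly stronger. Indeed \eqref{eq:empty} itself is join-free, valid in every language algebra, yet refutable in an integral relation algebra: on the base $\mathbb{Z}_2$ take $x=y=\{(0,1),(1,0)\}$; then $x\comp y\meet\ide=\ide$ while $(x\meet\ide)\comp(y\meet\ide)=0$, and the generated algebra $\{0,\ide,x,x\join\ide\}$ is integral. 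So neither the join-elimination step nor the witness construction can simply be imported from the relational setting; both require language-specific machinery, and your canonical word model would have to be shown to respect exactly the derivations that \eqref{eq:empty} newly licenses --- precisely the matching problem you defer.

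For contrast, the paper does not redo any construction for languages. It invokes \cite{AMN-equ-11}, where the equational theory of $\Lang(\join,\meet,\comp,0,\ide)$ is shown to be axiomatized by \eqref{eq:empty} and \eqref{eq:int2} \emph{relative to} the equational theory of $\Rel(\join,\meet,\comp,0,\ide)$, and then observes that a minimal modification of the proof of \cite[Corollary~3.7]{AMN-equ-11} gives the same relative to $\Rel^i(\join,\meet,\comp,0,\ide)$ using \eqref{eq:empty} alone (since \eqref{eq:int2} is already among the integral axioms); Theorem~\ref{thm:dl} then turns this relative axiomatization into the stated absolute one. In other words, the content your proposal is missing is exactly what that citation supplies: to repair your proof, either invoke \cite[Corollary~3.7]{AMN-equ-11} and check its relativization to the integral case, or genuinely carry out (not merely sketch) the canonical word-model argument, including the language analogue of the join-reduction lemma.
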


\begin{proof}
In \cite{AMN-equ-11} it is shown that the equational theory
of $\Lang(\join,\meet,\comp,0,\ide)$ is finitely axiomatizable
over the equational theory of $\Rel(\join,\meet,\comp,0,\ide)$
by 
the equations~\eqref{eq:empty} and~\eqref{eq:int2}.
By a minimal and straightforward modification of the proof of
\cite[Corollary~3.7]{AMN-equ-11} we get that the equational theory
of $\Lang(\join,\meet,\comp,0,\ide)$ is finitely axiomatizable
by~\eqref{eq:empty}
over the equational theory of $\Rel^i(\join,\meet,\comp,0,\ide)$,
which is axiomatized by $\mbox{\rm Ax}^i(\join,\meet,\comp,0,\ide)$.
\end{proof}

\section{Relational representation of the free algebra}

In this section we make the proof of Theorem~\ref{thm:main} complete.
We will need the following easy consequences of
$\mbox{\rm Ax}(\join,\meet,\comp,0,\ide)$:
\begin{equation}
(e_1\comp x\comp e_2)\meet(e_3\comp x \comp e_4)=
(e_1\meet e_3)\comp x\comp (e_2\meet e_4)
\label{eq:con1}
\end{equation}
\begin{equation}
e\comp(x\meet y)=e\comp x\meet e\comp y= e\comp x\meet y= x\meet e\comp y
\label{eq:con2}
\end{equation}
where $e,e_1,\ldots , e_4$ are subidentity terms (have the form $z\meet\ide$).

\subsection{Step-by-step construction}

Fix a countable set $X$ of variables.
Let $T_X$ be the set of $(\meet,\comp,0,\ide)$-terms
using the variables from $X$ and 
${\mathfrak F}_X=(F_X,\meet,\comp,0,\ide)$ be the 
free algebra of the variety defined by $\mbox{\rm Ax}^i(\meet,\comp,0,\ide)$
which is freely generated by $X$.
That is, $\mathfrak{F}_X$ is given by factoring the absolutely free algebra
by the congruence of derivability from $\mbox{\rm Ax}^i(\meet,\comp,0,\ide)$
in equational logic.
Thus $\free_X\models\tau\le\sigma$
(under the natural valuation of evaluating every variable to itself)
iff
$\mbox{\rm Ax}^i(\meet,\comp,0,\ide)\vdash \tau\le\sigma$,
where $\vdash$ denotes derivability in equational logic.

By a filter ${\mathcal F}$ of ${\mathfrak F}_X$ we mean 
a subset closed upward and under meet $\meet$,
that is,  
$\tau,\sigma\in {\mathcal F}$ iff
$\tau\meet\sigma\in {\mathcal F}$.
For a $S\subseteq F_X$, let $\mathcal{F}(S)$ denote the filter generated by $S$.
In particular, for a term $\tau$, $\mathcal{F}(\{\tau\})$ denotes 
the principal filter generated by $\{\tau\}$, i.e., 
the upward closure 
$\{\tau\}^\uparrow$ of the singleton set $\{\tau\}$. 
We extend the operation $\comp$ to subsets of elements as follows:
\begin{align*}
X\comp Y &=\set{x\comp y:x\in X, y\in Y}
\end{align*}
for subsets $X,Y$.
When $X=\{ x\}$ is a singleton, we will also use the notation 
$x\comp Y=\{ x\comp y:y\in Y\}$, $\{ x\}^\uparrow= x^\uparrow$, 
$\mathcal{F}(\{\tau\})=\mathcal{F}(\tau)$, etc.

For the whole rest of the section we fix a term $\theta$ such that
$\mbox{\rm Ax}^i(\meet,\comp,0,\ide)\not\vdash\theta=0$.

Next we define special filters that are generated by some set of subidentity elements. 
We define 
\begin{align*}
\mathcal{E}(\tau):&=\mathcal{F}(\{\epsilon\le\ide:\epsilon\comp\tau\comp \epsilon=\tau\})&&\\
&=\mathcal{F}(\{\epsilon\le\ide:\tau\comp \epsilon=\tau\})&&\mbox{by~\eqref{eq:int2},\eqref{eq:fun}}\\
&=\mathcal{F}(\{\epsilon\le\ide:\epsilon\comp\tau=\tau\})&&\mbox{by~\eqref{eq:int2},\eqref{eq:fun}}
\end{align*}
for each element $\tau$.
It is worth noting that
\begin{equation}\label{eq:emon}
\tau\le\sigma\mbox{ implies }\mathcal{E}(\sigma)\subseteq\mathcal{E}(\tau)
\end{equation}
since $\epsilon\comp\tau=\epsilon\comp(\tau\meet\sigma)=\tau\meet\epsilon\comp\sigma\ge
\tau\meet\sigma=\tau$ by \eqref{eq:con2} whenever $\epsilon\in\mathcal{E}(\sigma)$.
We denote $\mathcal{E}:=\mathcal{E}(\theta)$ for our fixed term $\theta$.
We say that the filter $\mathcal{F}$ is \emph{fundamental}
if the following holds:
there is an element $\tau$ such that
$\mathcal{F}=\mathcal{F}(\mathcal{E}\comp\tau\comp\mathcal{E})$.
Observe that $\mathcal{E}=\mathcal{F}(\mathcal{E}\comp\ide\comp\mathcal{E})$
is fundamental, since for subidentity elements $\epsilon_1,\epsilon_2$, we have
$\epsilon_1\comp\ide\comp \epsilon_2=\epsilon_1\comp \epsilon_2=\epsilon_1\meet \epsilon_2$
by~\eqref{eq:fun}.
Also note that
$\mathcal{F}(\mathcal{E}\comp\tau\comp\mathcal{E})=
(\mathcal{E}\comp\tau\comp\mathcal{E})^\uparrow$,
since 
$(\epsilon_1\comp\tau\comp \epsilon_2)\meet(\epsilon_3\comp\tau\comp \epsilon_4)=
(\epsilon_1\meet \epsilon_3)\comp\tau\comp(\epsilon_2\meet \epsilon_4)$
by~\eqref{eq:con1}.

We will define a chain of labelled, directed graphs $G_n= (U_n,\ell_n,E_n,W_n)$
for $n\in\omega$, where 
\begin{itemize}
\item
$U_n$ is the set of nodes,
\item
$\ell_n\colon U_n\times U_n\to\wp(F_X)$ is a labelling of edges,
\item
$E_n:=\{(u,v)\in U_n\times U_n: \ell_n(u,v)\ne\emptyset\}$
is the set of edges with non-empty labels,
\item
$W_n\subseteq E_n$ is a distinguished set of \emph{witness edges}.
\end{itemize}
We will make sure that the following inductive conditions are maintained during the construction:
\begin{description}
\item[RT]
$E_n$ is reflexive and transitive.
\item[Gen]
$W_n$ generates $E_n$ by closing under transitivity.
\item[Fun]
for every $(u,v)\in E_n$, $\ell_n(u,v)$ is a proper, fundamental filter:
there is $\tau$ such that 
$\ell_n(u,v)=\mathcal{F}(\mathcal{E}\comp\tau\comp\mathcal{E})$.
\item[DR]
for every $(u,v)\in E_n$, 
$\mathcal{E}(\sigma)\subseteq\ell_n(u,u)=\ell_n(v,v)=\mathcal{E}$
for every $\sigma\in\ell_n(u,v)$.
\item[Comp]
for every $(u,v),(u,w),(w,v)\in E_n$, we have 
$\ell_n(u,w)\comp\ell_n(w,v)\subseteq\ell_n(u,v)$.
\item[Ide]
for every $(u,v)\in E_n$, if $\ide\in\ell_n(u,v)$, then $u=v$.
\end{description}
Observe that DR implies 
$\ide\in\ell_n(u,u)=\ell_n(v,v)$ for every $u$ and $v$.

These conditions, with the exception of Ide, will be easily seen
to be maintained during the construction. 
We will check that Ide holds for witness edges $(u,v)\in W_n$ as well,
but we will establish the general case for Ide only at the end of the section.

The construction will terminate in $\omega$ steps, 
yielding $G_\omega=(U_\omega,\ell_\omega,E_\omega,W_\omega)$
where $U_\omega=\bigcup_n U_n$, $\ell_\omega=\bigcup_n \ell_n$,
$E_\omega=\bigcup_n E_n$ and $W_\omega=\bigcup_n W_n$.

By the end of the construction we will achieve the following 
\emph{saturation} condition:
\begin{description}
\item[Sat]
for every $(u,v)\in E_\omega$ and $\tau\comp\sigma\in\ell_\omega(u,v)$, we have 
$\tau\in\ell_\omega(u,w)$ and $\sigma\in\ell_\omega(w,v)$ for some $w\in U_\omega$.
\end{description}

In the 0th step of the step-by-step construction we define $G_0$
by creating a witness edge for our fixed term $\theta$.
We choose $u_0,v_0\in\omega$ such that $u_0=v_0$ iff $\theta\le\ide$ is derivable.
We define
\begin{align*}
\ell_0(u_0,v_0)&=\mathcal{F}(\mathcal{E}\comp\theta\comp\mathcal{E})=\mathcal{F}(\theta)\\
\ell_0(u_0,u_0)&=\ell_0(v_0,v_0)=\mathcal{E}
\end{align*} 
and we label $(v_0,u_0)$ by $\emptyset$ in case $u_0\ne v_0$.
Observe that $\ell_0$ is well defined.
Indeed, in case $\theta\le\ide$, we have that 
$\mathcal{E}=\theta^\uparrow=\mathcal{F}(\theta)$.
All non-empty edges constructed so far will be witness edges: $W_0=E_0$.

It is easy to see that the inductive conditions are true.
For Comp we note that,
for every $\epsilon,\epsilon'\in\ell_0(u_0,u_0)=\ell_0(v_0,v_0)=\mathcal{E}$, we have
$\epsilon\comp\epsilon'=\epsilon\meet\epsilon'\in\mathcal{E}$ 
by \eqref{eq:fun},
and 
$\epsilon\comp\theta\comp\epsilon'=\theta\in\ell_0(u_0,v_0)$
by
the definition of $\mathcal{E}$.
DR easily follows from \eqref{eq:emon}.
Note that $\theta'\notin\ell_0(u_0,v_0)$ whenever
$\mbox{\rm Ax}^i(\meet,\comp,0,\ide)\not\vdash\theta\le\theta'$,
since $\theta'\notin\mathcal{F}(\theta)=\theta^\uparrow$.

In the $(n+1)$th step we assume inductively that 
$G_n=(U_n,\ell_n,E_n,W_n)$ with $U_n\subset \omega$ and 
$W_n\subseteq E_n\subseteq U_n\times U_n$
has been constructed and that $G_n$ satisfies the inductive conditions.
We also assume that there is a fair scheduling function 
$\Sigma\colon\omega\to\omega\times\omega\times F_X$,
i.e., every element of $\omega\times\omega\times F_X$
appears infinitely often in the range of $\Sigma$.

Assume that $\Sigma(n+1)=(u,v,\tau\comp\sigma)$,
$(u,v)\in E_n$ and $\tau\comp\sigma\in\ell_n(u,v)$,
otherwise we define $G_{n+1}= G_n$.
Recall that $\ell_n(u,u)=\ell_n(v,v)=\mathcal{E}$ by DR.
Let $\rho$ be such that 
$\ell_n(u,v)=\mathcal{F}(\mathcal{E}\comp\rho\comp\mathcal{E})$, 
and $\rho'$ be such that
$\ell_n(v,u)=\mathcal{F}(\mathcal{E}\comp\rho'\comp\mathcal{E})$ 
in case $(v,u)\in E_n$ as well.
Thus $\epsilon_0\comp\rho\comp\epsilon_0\le\tau\comp\sigma$ 
for some subidentity $\epsilon_0\in\mathcal{E}$.
Observe that, for any subidentity $\epsilon$,
\begin{equation}
(\epsilon_0\meet\epsilon)\comp\rho\comp(\epsilon_0\meet\epsilon)\meet\tau\comp\sigma=
(\epsilon_0\meet\epsilon)\comp\rho\comp(\epsilon_0\meet\epsilon)\meet
(\epsilon_0\meet\epsilon)\comp\tau\comp(\epsilon_0\meet\epsilon)\comp\sigma
\label{eq:eps}
\end{equation}
by using $\epsilon_0\meet\epsilon=(\epsilon_0\meet\epsilon)\comp(\epsilon_0\meet\epsilon)$,
\eqref{eq:int2} and~\eqref{eq:con2}.

We will assume that $\ide\notin\mathcal{F}(\mathcal{E}\comp\tau\comp\mathcal{E})$ 
and $\ide\notin\mathcal{F}(\mathcal{E}\comp\sigma\comp\mathcal{E})$,
since we would have the required edges otherwise. 
Indeed, we have
\begin{equation}\label{eq:nonide}
\ide\in\mathcal{F}(\mathcal{E}\comp\tau\comp\mathcal{E})\mbox{ implies }
\tau\in\ell_n(u,u)\mbox{ and }\sigma\in\ell_n(u,v)
\end{equation}
because of the following.
Assume that $\epsilon'\comp\tau\comp\epsilon'\le\ide$ for some
subidentity $\epsilon'\in\mathcal{E}$.
Let $\epsilon=\epsilon_0\meet\epsilon'$.
Then $\epsilon\comp\rho\comp\epsilon\in\ell_\omega(u,v)$ and
\begin{align*}
\epsilon\comp\rho\comp\epsilon&=(\epsilon\comp\rho\comp\epsilon)\meet(\tau\comp\sigma)\\
&=(\epsilon\comp\rho\comp\epsilon)\meet
(\epsilon\comp\tau\comp\epsilon\comp\sigma)
&&\mbox{by }\eqref{eq:eps}\\
&\le\rho\meet\sigma
&&\mbox{by }\epsilon\comp\tau\comp\epsilon\le\ide\\
&\le\sigma&&
\end{align*}
whence $\sigma\in\ell_n(u,v)$.
Next we claim that $\epsilon\comp\tau\comp\epsilon\in\ell_n(u,u)$.
Indeed, by using~\eqref{eq:eps},~\eqref{eq:con2}
we get
\begin{align*}
(\epsilon\comp\tau\comp\epsilon)\comp(\epsilon\comp\rho\comp\epsilon)&=
(\epsilon\comp\tau\comp\epsilon)\comp(\tau\comp\sigma\meet
\epsilon\comp\rho\comp\epsilon)\\
&=(\epsilon\comp\tau\comp\epsilon)\comp
((\epsilon\comp\tau\comp\epsilon\comp\sigma)\meet
\epsilon\comp\rho\comp\epsilon)\\
&=(\epsilon\comp\tau\comp\epsilon\comp\sigma)
\meet(\epsilon\comp\rho\comp\epsilon)\\
&=(\tau\comp\sigma)
\meet(\epsilon\comp\rho\comp\epsilon)\\
&=\epsilon\comp\rho\comp\epsilon
\end{align*}
whence $\tau\ge\epsilon\comp\tau\comp\epsilon\in\ell_n(u,u)$ by DR.
The case $\ide\in\mathcal{F}(\mathcal{E}\comp\sigma\comp\mathcal{E})$
is treated similarly.

Take a point $w\in\omega\smallsetminus U_n$ and define
\begin{align*}
\ell_{n+1}(w,w)&=\mathcal{E}\\
\ell_{n +1}(t,w)&=\mathcal{F}(\ell_n(t,u)\comp\tau\comp\mathcal{E})\\
\ell_{n +1}(w,s)&=\mathcal{F}(\mathcal{E}\comp\sigma\comp\ell_n(v,s))
\end{align*}
whenever $(t,u),(v,s)\in E_n$.
In particular,
$$
\ell_{n +1}(u,w)=
\mathcal{F}(\mathcal{E}\comp\tau\comp\mathcal{E})
\quad\mbox{and}\quad
\ell_{n +1}(w,v)=
\mathcal{F}(\mathcal{E}\comp\sigma\comp\mathcal{E})
$$
since $\ell_n(u,u)=\ell_n(v,v)=\mathcal{E}$.
By Fun and DR there are $\xi$ and $\chi$ such that 
$\ell_n(t,u)=\mathcal{F}(\mathcal{E}\comp\xi\comp\mathcal{E})$
and
$\ell_n(v,s)=\mathcal{F}(\mathcal{E}\comp\chi\comp\mathcal{E})$.
Then using~\eqref{eq:int2}
\begin{align*}
\ell_{n +1}(t,w)&=\mathcal{F}(\mathcal{E}\comp\xi\comp\mathcal{E}\comp\tau\comp\mathcal{E})
=\mathcal{F}(\mathcal{E}\comp\xi\comp\tau\comp\mathcal{E})\\
\ell_{n +1}(w,s)&=\mathcal{F}(\mathcal{E}\comp\sigma\comp\mathcal{E}\comp\chi\comp\mathcal{E})
=\mathcal{F}(\mathcal{E}\comp\sigma\comp\chi\comp\mathcal{E})
\intertext{and similarly}
\ell_{n +1}(w,u)&=
\mathcal{F}(\mathcal{E}\comp\sigma\comp\rho'\comp\mathcal{E})\\
\ell_{n +1}(v,w)&=
\mathcal{F}(\mathcal{E}\comp\rho'\comp\tau\comp\mathcal{E})
\end{align*}
when $(v,u)\in E_n$.
Thus the labels are fundamental filters.
For all other edges, we let 
$\ell_{n+1}(x,y)=\ell_{n}(x,y)$ if  $(x,y)\in E_n$,
and $\ell_{n+1}(x,y)=\emptyset$ otherwise. 
The witness edges are those of $G_n$ (i.e., $W_n$)
and $(u,w)$, $(w,v)$ and $(w,w)$.
See Figure~\ref{fig:comp3}, where we show typical elements of the labels.
\begin{figure}[h]
\[
\xymatrix{
&&&&w\ar@(ul,ur)^{\epsilon}
\ar@/_1.5pc/[dddllll]_{\epsilon\comp\sigma\comp\rho'\comp\epsilon}
\ar[dddrrrr]_(0.6){\epsilon\comp\sigma\comp\epsilon}
\ar[ddddddr]_(.9){\epsilon\comp\sigma\comp\chi\comp\epsilon}&&&&\\
&&&&&&&&\\
&&&&&&&&\\
u\ar@(ul,dl)_{\epsilon}
\ar[uururrr]_(0.4){\epsilon\comp\tau\comp\epsilon}
\ar@/^1pc/[rrrrrrrr]_{\epsilon\comp\rho\comp\epsilon}&&&&&&&&
v\ar@(ur,dr)^{\epsilon}\ar[dddlll]^{\epsilon\comp\chi\comp\epsilon}
\ar@/_1.5pc/[uuullll]_{\epsilon\comp\rho'\comp\tau\comp\epsilon}
\ar@/^1pc/[llllllll]^{\epsilon\comp\rho'\comp\epsilon}
\\
&&&&&&&&\\
&&&&&&&&\\
&&&t\ar@(ul,dl)_{\epsilon}\ar[uuulll]^{\epsilon\comp\xi\comp\epsilon}
\ar[uuuuuur]_(.2){\epsilon\comp\xi\comp\tau\comp\epsilon}&&
s\ar@(ur,dr)^{\epsilon}&&&
}
\]
\caption{Successor step}\label{fig:comp3}
\end{figure}
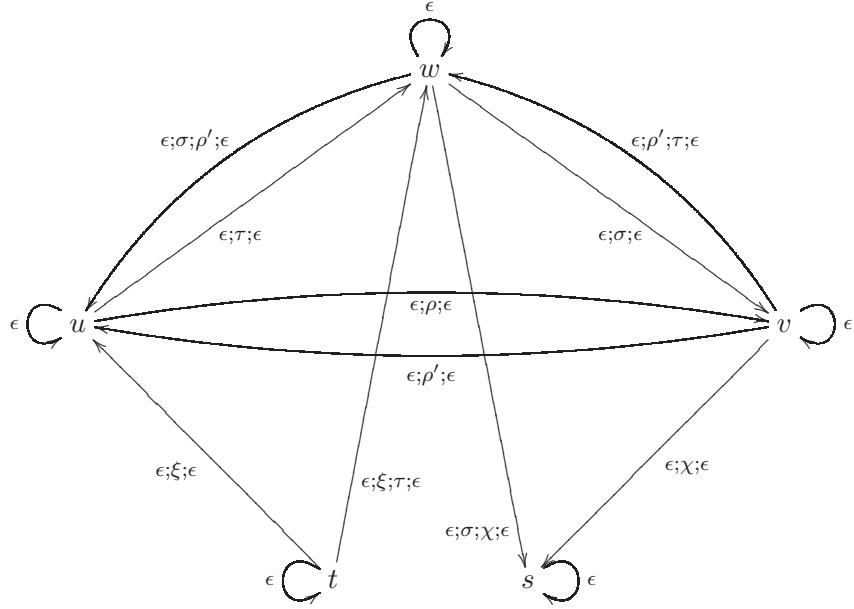

Observe that $E_{n+1}$ is reflexive, transitive and generated by $W_{n+1}$
(e.g., in case $(v,u)\in E_n$, the edge $(w,u)$ can be ``decomposed'' to $(w,v)\in W_{n+1}$
and $(v,u)$ which is generated by $W_n$ by the inductive condition).
Ide holds for witness edges, since the new irreflexive witness edges 
$(u,w)$ and $(w,v)$ avoid $\ide$ by the assumption on $\tau$ and $\sigma$.

Next we check that Comp is maintained as well.
Both $\ell_{n+1}(w,w)\comp\ell_{n+1}(w,s)\subseteq\ell_{n+1}(w,s)$ and
$\ell_{n+1}(t,w)\comp\ell_{n+1}(w,w)\subseteq\ell_{n+1}(t,w)$
easily follow from the definition of the labels.
Similarly we get
$\ell_{n+1}(w,s)\comp\ell_{n+1}(s,s)\subseteq\ell_{n+1}(w,s)$ and
$\ell_{n+1}(t,t)\comp\ell_{n+1}(t,w)\subseteq\ell_{n+1}(t,w)$
using that $G_n$ satisfies DR and Comp.
If $(w,t),(t,w)\in E_{n+1}$, we need 
$\ell_{n+1}(w,t)\comp\ell_{n+1}(t,w)\subseteq\ell_{n+1}(w,w)$
as well.
Let $\chi\in\ell_{n+1}(v,t)$ and $\xi\in\ell_{n+1}(t,u)$ 
so that $\epsilon\comp\sigma\comp\chi\comp\epsilon\in\ell_{n+1}(w,t)$
and $\epsilon\comp\xi\comp\tau\comp\epsilon\in\ell_{n+1}(t,w)$
for some subidentity $\epsilon\in\mathcal{E}$.
Then
\begin{align*}
\epsilon\comp\sigma\comp\chi\comp\epsilon\comp\xi\comp\tau\comp\epsilon\meet\epsilon&
\ge\epsilon\comp\sigma\comp\rho'\comp\tau\comp\epsilon\meet\epsilon\\
&=\epsilon\comp\sigma\comp\tau\comp\rho'\comp\epsilon\meet\epsilon\\
&\ge\epsilon\comp\rho\comp\rho'\comp\epsilon\meet\epsilon\\
&\in\ell_n(u,u)=\mathcal{E}
\end{align*}
by \eqref{eq:int1} and Comp for $G_n$.
Thus 
$\epsilon\comp\sigma\comp\chi\comp\epsilon\comp\xi\comp\tau\comp\epsilon
\in\mathcal{E}=\ell_{n+1}(w,w)$
as desired.
The proof of
$\ell_{n+1}(t,w)\comp\ell_{n+1}(w,t)\subseteq\ell_{n+1}(t,t)$ is similar.
Next we show
$\ell_{n+1}(t,w)\comp\ell_{n+1}(w,s)\subseteq\ell_{n+1}(t,s)$.
Indeed,  we have
\begin{align*}
\mathcal{F}(\ell_{n}(t,u)\comp\tau\comp\mathcal{E})\comp
\mathcal{F}(\mathcal{E}\comp\sigma\comp\ell_{n}(v,s))
&\subseteq
\mathcal{F}(\ell_{n}(t,u)\comp\tau\comp\sigma\comp\ell_{n}(v,s))\\
&\subseteq
\mathcal{F}(\ell_{n}(t,u)\comp\rho\comp\ell_{n}(v,s))\\
&\subseteq
\ell_{n}(t,s)=\ell_{n+1}(t,s)
\end{align*}
by Comp for $G_n$.
For
$\ell_{n+1}(s,t)\comp\ell_{n+1}(t,w)\subseteq\ell_{n+1}(s,w)$
we have
\begin{align*}
\ell_n(s,t)\comp\mathcal{F}(\ell_n(t,u)\comp\tau\comp\mathcal{E})
&\subseteq
\mathcal{F}(\ell_{n}(s,t)\comp\ell_n(t,u)\comp\tau\comp\mathcal{E})\\
&\subseteq
\mathcal{F}(\ell_{n}(s,u)\comp\tau\comp\mathcal{E})\\
&=\ell_{n+1}(s,w)
\end{align*}
by Comp for $G_n$,
and similarly for
$\ell_{n+1}(w,s)\comp\ell_{n+1}(s,t)\subseteq\ell_{n+1}(w,t)$.

Finally we check DR.
First let $\xi\in\ell_{n+1}(t,u)$ and $\epsilon\in\mathcal{E}$ so that 
$\xi\comp\tau\comp\epsilon\in
\mathcal{F}(\ell_{n+1}(t,u)\comp\tau\comp\mathcal{E})=\ell_{n+1}(t,w)$,
and assume that $\epsilon'\le\ide$ such that
$\xi\comp\tau\comp\epsilon\comp\epsilon'=
\xi\comp\tau\comp\epsilon$.
We show that $\epsilon'\in\mathcal{E}=\ell_{n+1}(w,w)$.
We have
$\xi\comp\tau\comp\sigma\comp\epsilon\comp\epsilon'=
\xi\comp\tau\comp\epsilon\comp\epsilon'\comp\sigma=
\xi\comp\tau\comp\epsilon\comp\sigma=
\xi\comp\tau\comp\sigma\comp\epsilon\in\ell_{n}(t,v)$,
whence $\epsilon'\in\ell_n(v,v)=\mathcal{E}=\ell_{n+1}(w,w)$
by DR for $G_n$.
That is, $\mathcal{E}(\xi\comp\tau\comp\epsilon)\subseteq\ell_{n+1}(w,w)$.
A similar proof shows DR for $\ell_{n+1}(w,s)$.

This finishes the successor step yielding
$G_{n+1}=(U_{n+1},\ell_{n+1},E_{n+1},W_{n+1})$.
After the construction terminates we end up with a labelled structure
$G_\omega=(U_\omega,\ell_\omega, E_\omega,W_\omega)$ 
such that $U_\omega=\bigcup_n U_n$, $\ell_\omega=\bigcup_n\ell_n$,
$E_\omega=\bigcup_n E_n$ and $W_\omega=\bigcup_n W_n\subseteq E_\omega$.
Observe that $G_{\omega}$ satisfies Sat, 
since the fair scheduling function $\Sigma$ ensures that every possible 
composition ``defect'' has been taken care of.
But we have not showed yet that Ide is satisfied in general,
we will do this shortly.
$G_\omega$ satisfies the other inductive conditions
(since so does every $G_n$).

\subsection{Graphs and algebras}

We define a valuation $\iota$ of variables on $U_\omega\times U_\omega$:
\begin{equation}\label{eq:val}
\iota(x)=\{(u,v)\in E_\omega: x\in\ell_\omega(u,v)\}
\end{equation}
for every variable $x\in X$.
Let $\mathfrak{A}_\theta$ be the subalgebra of
$(\wp(U_\omega\times U_\omega),\meet,\comp,0,\ide)$ 
generated by $\{\iota(x):x\in X\}$.
Next we show that $\mathfrak{A}_\theta\in\R^i(\meet,\comp,0,\ide)$.

Let $\mathfrak{A}_\theta(u,v)$ denote the set of those elements (a filter) 
that hold at $(u,v)$.

\begin{lemma}\label{lem:gen}
For every $u\in U_\omega$ and term $\tau$,
\begin{equation}\label{eq:refl}
\tau\in\ell_\omega(u,u)\text{ implies }\tau\in\mathfrak{A}_\theta(u,u).
\end{equation}
\end{lemma}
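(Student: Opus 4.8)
The plan is to argue by induction on the structure of a term representing $\tau$. Here I read $\iota(\tau)$ as the relation obtained by interpreting $\tau$ under the valuation~\eqref{eq:val}, extended to terms through the operations, so that the conclusion $\tau\in\mathfrak{A}_\theta(u,u)$ is literally the assertion $(u,u)\in\iota(\tau)$. It is also worth recording at the outset that, by \textbf{DR}, $\ell_\omega(u,u)=\mathcal{E}$ for every node $u$; hence the hypothesis $\tau\in\ell_\omega(u,u)$ does not depend on $u$ and simply says $\tau\in\mathcal{E}$.

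I would first clear the easy cases. If $\tau$ is a variable $x$, then $x\in\ell_\omega(u,u)$ gives $(u,u)\in\iota(x)$ at once from~\eqref{eq:val}. If $\tau=\ide$ there is nothing to do, since $(u,u)$ belongs to the identity relation $\iota(\ide)$; and $\tau=0$ cannot arise, because by \textbf{Fun} the label $\ell_\omega(u,u)$ is a proper filter and therefore omits $0$. If $\tau=\sigma_1\meet\sigma_2$, then, as $\ell_\omega(u,u)$ is a filter and $\sigma_1\meet\sigma_2\le\sigma_1,\sigma_2$, both $\sigma_1$ and $\sigma_2$ lie in $\ell_\omega(u,u)$; the induction hypothesis then gives $(u,u)\in\iota(\sigma_1)\cap\iota(\sigma_2)=\iota(\tau)$.

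The crux is the composition case $\tau=\sigma_1\comp\sigma_2$. Here the natural tool is \textbf{Sat}: from $\sigma_1\comp\sigma_2\in\ell_\omega(u,u)$ it produces a node $w$ with $\sigma_1\in\ell_\omega(u,w)$ and $\sigma_2\in\ell_\omega(w,u)$ (so that $(u,w),(w,u)\in E_\omega$). If I can conclude $(u,w)\in\iota(\sigma_1)$ and $(w,u)\in\iota(\sigma_2)$, then $(u,u)\in\iota(\sigma_1)\comp\iota(\sigma_2)=\iota(\tau)$ by the definition of relational composition. The point to notice is that these two facts live off the diagonal, so the induction cannot be confined to reflexive edges. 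I would therefore prove the stronger ``truth lemma'' $\tau\in\ell_\omega(u,v)$ implies $(u,v)\in\iota(\tau)$ for all $(u,v)\in E_\omega$, the present statement being the case $u=v$. With this strengthening the variable, meet and composition cases go through exactly as above, the composition case using \textbf{Sat} and the meet case using that every label is a filter by \textbf{Fun}.

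The hard part will be the one base case that the strengthening exposes: a bare $\ide$ on an off-diagonal edge. Indeed, to finish the $\ide$-leaf of the induction for a pair $(u,v)$ I must know that $\ide\in\ell_\omega(u,v)$ forces $u=v$, which is precisely the condition \textbf{Ide} that has not yet been secured in general. I expect this to be the real obstacle, and I would resolve it without circularity by exploiting what the construction already guarantees: \textbf{Ide} holds on the witness edges $W_\omega$, by \textbf{Gen} every edge of $E_\omega$ arises from witness edges under transitive closure, and \textbf{Comp} together with \textbf{DR} govern how the fundamental labels compose along such a decomposition. Tracing an occurrence of $\ide$ (equivalently, of a subidentity generator of $\mathcal{E}$) back along a witnessing path, and using integrality~\eqref{eq:int1},~\eqref{eq:int2} with~\eqref{eq:con1} and~\eqref{eq:con2} to push the subidentity through each factor, should collapse the path to a single node and yield $u=v$. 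Establishing \textbf{Ide} in this form removes the offending base case and completes the induction, giving the lemma together with its off-diagonal strengthening.
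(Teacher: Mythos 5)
Your overall induction scheme (strengthen the statement to a truth lemma on edges, use \textbf{Sat} for $\comp$, filterhood for $\meet$) has the right shape, and you correctly located the obstruction: the constant $\ide$ on an irreflexive edge, i.e.\ condition \textbf{Ide}. But your plan for discharging that obstruction is exactly where the proposal breaks down. You propose to prove \textbf{Ide} combinatorially, by tracing $\ide$ back along a witnessing path and ``pushing the subidentity through each factor'' using \eqref{eq:int1}, \eqref{eq:int2}, \eqref{eq:con1}, \eqref{eq:con2}. Concretely, this would need an inference of the form: if $\epsilon\comp\rho_1\comp\rho_2\comp\epsilon\le\ide$ is derivable for some $\epsilon\in\mathcal{E}$, then $\ide\in\mathcal{F}(\mathcal{E}\comp\rho_i\comp\mathcal{E})$ for each factor $\rho_i$ coming from a witness edge. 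No sound equational manipulation can deliver this, because the corresponding implication is not even valid in $\R^i(\meet,\comp,0,\ide)$: on the base $\mathbb{Z}$ take $a=\{(n,n+1):n\in\mathbb{Z}\}$ and $b=\{(n,n-1):n\in\mathbb{Z}\}$; the algebra they generate is integral, $a\comp b=\ide$, yet neither $a\le\ide$ nor $b\le\ide$. So a composition can be derivably below $\ide$ without either factor being essentially a subidentity, and ``collapsing the path'' cannot be forced syntactically. What you actually need is the meta-level fact that \emph{non-derivability} of $\ide$ in the factor labels is preserved when labels are composed along a path. That fact is true, but the paper can only prove it (as \eqref{eq:ind2}, inside the proof of Lemma~\ref{lem:w}) semantically, by using $\mathfrak{A}_\theta$ itself as a countermodel to derivability --- which presupposes that $\mathfrak{A}_\theta\in\R^i(\meet,\comp,0,\ide)$ (Lemma~\ref{lem:int}), which in turn presupposes the present lemma and Lemma~\ref{lem:w1}. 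Making \textbf{Ide} a prerequisite of the present lemma therefore reinstates precisely the circularity you set out to avoid.

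The paper escapes this trap by proving a \emph{weaker} off-diagonal statement than your full truth lemma: \eqref{eq:refl} is proved simultaneously with \eqref{eq:nrefl}, which is restricted to terms $\tau\not\le\ide$, so the troublesome base case ($\ide$ on an irreflexive edge) never arises at this stage. The price is a case split in the $\comp$ step: if a factor is below $\ide$, say $\sigma\le\ide$ in $\tau=\sigma\comp\rho$, one does not invoke \textbf{Sat} at all; instead \eqref{eq:nonide} relocates that factor onto the diagonal, giving $\sigma\in\ell_\omega(u,u)$ and $\rho\in\ell_\omega(u,v)$, after which the induction hypotheses for \eqref{eq:refl} and \eqref{eq:nrefl} apply. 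Only later, with this restricted lemma, its converse (Lemma~\ref{lem:w1}), and integrality of $\mathfrak{A}_\theta$ (Lemma~\ref{lem:int}) in hand, does the paper establish \textbf{Ide} in general and hence the unrestricted truth lemma (Lemma~\ref{lem:w}), by a further induction on path distance. To repair your proof, replace the unrestricted strengthening by the restricted one and handle subidentity factors via \eqref{eq:nonide}; as written, your induction has an unfillable hole at its $\ide$ base case.
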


\begin{proof}
We will prove the lemma together with
\begin{equation}\label{eq:nrefl}
\tau\not\le\ide\text{ and }\tau\in\ell_\omega(u,v)\text{ imply }
\tau\in\mathfrak{A}_\theta(u,v)
\end{equation}
for every $(u,v)\in U_\omega\times U_\omega$.
We will use simultaneous induction on the complexity of terms.

The base case, when $\tau$ is a variable, the identity constant $\ide$, or $0$ is straightforward
by the definition of the valuation $\iota$ and the fact that the labels are proper filters.

Next assume that $\tau=\sigma\meet\rho$.
Recall that $\ell_\omega(u,v)$ is a filter, 
whence $\tau\in\ell_\omega(u,v)$ implies $\sigma,\rho\in\ell_\omega(u,v)$.
First consider the statement~\eqref{eq:nrefl}.
If $\tau\not\le\ide$, then also $\sigma,\rho\not\le\ide$,
whence we can apply the induction hypothesis (IH) for \eqref{eq:nrefl}, yielding 
$\sigma,\rho\in\mathfrak{A}_\theta(u,v)$ and
$\sigma\meet\rho\in\mathfrak{A}_\theta(u,v)$, since  $\mathfrak{A}_\theta(u,v)$ is a filter.
The proof for~\eqref{eq:refl} is analogous.

Finally assume that $\tau=\sigma\comp\rho$.
We start with showing~\eqref{eq:nrefl}.
First assume that $\sigma\le\ide$, whence $\rho\not\le\ide$ (by $\tau\not\le\ide$).
Then using~\eqref{eq:nonide} we get that
$\sigma\in\ell_\omega(u,u)$ and $\rho\in\ell_\omega(u,v)$.
By the IH for~\eqref{eq:refl} and~\eqref{eq:nrefl}
we have $\sigma\in\mathfrak{A}_\theta(u,u)$ and $\rho\in\mathfrak{A}_\theta(u,v)$.
Thus $\sigma\comp\rho\in\mathfrak{A}_\theta(u,v)$.
The case $\rho\le\ide$ is treated similarly.
The last case is when $\sigma,\rho\not\le\ide$.
By Sat we have $w$ such that $\sigma\in\ell_\omega(u,w)$ and $\rho\in\ell_\omega(w,v)$.
Using the IH for~\eqref{eq:nrefl} we get
$\sigma\in\mathfrak{A}_\theta(u,w)$ and $\rho\in\mathfrak{A}_\theta(w,v)$,
whence  $\sigma\comp\rho\in\mathfrak{A}_\theta(u,v)$.

For~\eqref{eq:refl} we argue as follows.
First assume that $\sigma\le\ide$.
Using~\eqref{eq:nonide} we get that
$\sigma,\rho\in\ell_\omega(u,u)$.
By the IH for~\eqref{eq:refl} we get
$\sigma,\rho\in\mathfrak{A}_\theta(u,u)$,
whence  $\sigma\comp\rho\in\mathfrak{A}_\theta(u,u)$.
The case for $\rho\le\ide$ is similar.
Finally, the case $\sigma,\rho\not\le\ide$ is treated precisely like for~\eqref{eq:nrefl}
at the end of the previous paragraph.
\end{proof}

\begin{lemma}\label{lem:w1}
For every term $\tau$ and for every edge $(u,v)\in U_\omega\times U_\omega$,
\begin{equation*}
\tau\in\mathfrak{A}_\theta(u,v)\mbox{ implies }\tau\in\ell_\omega(u,v).
\end{equation*}
\end{lemma}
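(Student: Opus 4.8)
The plan is to prove the implication by induction on the structure of the term $\tau$, since the signature consists only of $\meet$, $\comp$, $0$ and $\ide$. This is the converse of Lemma~\ref{lem:gen}, and I expect it to be the genuinely easy direction: whereas Lemma~\ref{lem:gen} relied on Sat to \emph{split} a composite along a witness, here the only global features of the construction I anticipate needing are the composition condition Comp together with the reflexivity and transitivity of $E_\omega$ (condition RT) and the fact that each non-empty label is a proper filter (condition Fun). Throughout, I read ``$\tau\in\mathfrak{A}_\theta(u,v)$'' as $(u,v)\in\tau^{\mathfrak{A}_\theta}$, the interpretation of $\tau$ under the valuation $\iota$.

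For the base cases, first take $\tau$ to be a variable $x$. By the definition~\eqref{eq:val} of $\iota$, the pair $(u,v)$ lies in $\iota(x)$ exactly when $x\in\ell_\omega(u,v)$, so the implication holds (indeed as an equivalence). If $\tau=\ide$, then $\tau^{\mathfrak{A}_\theta}$ is the identity relation, so $(u,v)\in\tau^{\mathfrak{A}_\theta}$ forces $u=v$; and the observation recorded after DR gives $\ide\in\ell_\omega(u,u)$, as required. If $\tau=0$, then $\tau^{\mathfrak{A}_\theta}=\emptyset$ and the implication is vacuous.

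For the meet case $\tau=\sigma\meet\rho$, the operation $\meet$ is interpreted as intersection, so $(u,v)\in\tau^{\mathfrak{A}_\theta}$ yields $(u,v)\in\sigma^{\mathfrak{A}_\theta}$ and $(u,v)\in\rho^{\mathfrak{A}_\theta}$. By the induction hypothesis $\sigma,\rho\in\ell_\omega(u,v)$; in particular $\ell_\omega(u,v)\ne\emptyset$, so $(u,v)\in E_\omega$ and the label is a proper filter by Fun, hence closed under $\meet$. Thus $\sigma\meet\rho\in\ell_\omega(u,v)$.

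The composition case $\tau=\sigma\comp\rho$ is the only one requiring care, and is where I expect the main (though modest) obstacle to lie. Since $\comp$ is interpreted as relation composition, $(u,v)\in\tau^{\mathfrak{A}_\theta}$ produces a point $w\in U_\omega$ with $(u,w)\in\sigma^{\mathfrak{A}_\theta}$ and $(w,v)\in\rho^{\mathfrak{A}_\theta}$; by the induction hypothesis $\sigma\in\ell_\omega(u,w)$ and $\rho\in\ell_\omega(w,v)$. Both labels are then non-empty, so $(u,w),(w,v)\in E_\omega$, and by transitivity of $E_\omega$ (condition RT) also $(u,v)\in E_\omega$. Hence all three edges are present, condition Comp applies and gives $\ell_\omega(u,w)\comp\ell_\omega(w,v)\subseteq\ell_\omega(u,v)$, and since $\sigma\comp\rho$ lies on the left we conclude $\sigma\comp\rho\in\ell_\omega(u,v)$. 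The single point to check carefully is precisely this applicability of Comp, namely that the target edge $(u,v)$ is present — which transitivity of $E_\omega$ supplies; everything else is immediate from the inductive conditions, completing the induction.
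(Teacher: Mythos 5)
Your proof is correct and follows exactly the route the paper intends: the paper's own proof of Lemma~\ref{lem:w1} is just the one-line remark that it is ``an easy induction on the complexity of terms (using Comp for composition)'', and your argument is precisely that induction with the details (definition~\eqref{eq:val} for variables, DR for $\ide$, Fun for $\meet$, and RT plus Comp for $\comp$) filled in. No discrepancies to report.
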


\begin{proof}
This is an easy induction on the complexity of terms 
(using Comp for composition).
\end{proof}

\begin{lemma}\label{lem:int}
$\mathfrak{A}_\theta\in \R^i(\meet,\comp,0,\ide)$.
\end{lemma}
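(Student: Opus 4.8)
The membership $\mathfrak{A}_\theta\in\R(\meet,\comp,0,\ide)$ is immediate: by construction $\mathfrak{A}_\theta$ is a subalgebra of the full relation algebra $(\wp(U_\omega\times U_\omega),\meet,\comp,0,\ide)$ on the base $U_\omega$, so it is a relation algebra of the required type. Hence the only real content is \emph{integrality}, i.e.\ that $\ide$ is an atom of $\mathfrak{A}_\theta$. The plan is to reduce this to a statement about subidentity elements. Since every element of $\mathfrak{A}_\theta$ has the form $\iota(\tau)$ for some term $\tau$, and an element $a=\iota(\tau)$ with $a\le\ide$ equals $a\meet\ide=\iota(\tau\meet\ide)$, it suffices to show that $\iota(e)\in\{0,\ide\}$ for every subidentity term $e=\tau\meet\ide$. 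This says precisely that there is nothing strictly between $0$ and $\ide$, which is integrality.

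Next I would compute $\iota(e)$ pointwise on the diagonal. As $e\le\ide$ and $\iota$ is a homomorphism, $\iota(e)\subseteq\iota(\ide)$, the identity relation on $U_\omega$, so $\iota(e)$ contains only pairs of the form $(u,u)$. For such a pair, $(u,u)\in\iota(e)$ means $e\in\mathfrak{A}_\theta(u,u)$, and by Lemma~\ref{lem:w1} together with \eqref{eq:refl} of Lemma~\ref{lem:gen} this is equivalent to $e\in\ell_\omega(u,u)$. The crucial step is then to invoke condition \textbf{DR}, which guarantees that all reflexive labels coincide, $\ell_\omega(u,u)=\mathcal{E}$ for every $u\in U_\omega$. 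Consequently the condition $e\in\ell_\omega(u,u)$ holds either for all $u$ simultaneously or for none, according to whether $e\in\mathcal{E}$.

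Combining these observations: if $e\in\mathcal{E}$ then $(u,u)\in\iota(e)$ for every $u$, so $\iota(e)$ is the full diagonal $\ide$; and if $e\notin\mathcal{E}$ then $\iota(e)=\emptyset=0$. It remains to note that $\ide\ne 0$, which holds because $U_\omega$ is non-empty (it already contains the points $u_0,v_0$ of the $0$th step, and indeed $\ide\in\mathcal{E}$). Thus $\ide$ is a minimal non-zero element, i.e.\ an atom, and $\mathfrak{A}_\theta$ is integral. I expect no genuine obstacle in this final lemma: the decisive work has already been carried out in the step-by-step construction, where the invariant \textbf{DR} was maintained expressly to force the uniformity of the diagonal labels; given Lemmas~\ref{lem:gen} and~\ref{lem:w1}, integrality then follows from this all-or-nothing argument.
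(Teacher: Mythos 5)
Your proof is correct and follows essentially the same route as the paper: both reduce integrality to the uniformity of the diagonal, using Lemma~\ref{lem:w1} and \eqref{eq:refl} of Lemma~\ref{lem:gen} to identify $\mathfrak{A}_\theta(u,u)$ with $\ell_\omega(u,u)=\mathcal{E}$ (via \textbf{DR}), together with the fact that every element of $\mathfrak{A}_\theta$ is the interpretation of a term. The only difference is presentational: you derive the all-or-nothing dichotomy for subidentity terms directly, whereas the paper phrases the same argument as a contradiction with the uniformity statement $\mathfrak{A}_\theta(u,u)=\mathfrak{A}_\theta(v,v)$.
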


\begin{proof}
Clearly, $\mathfrak{A}_\theta$ is an algebra of relations.
Recall that we have $\ell_\omega(u,u)=\mathcal{E}(\theta)=\mathcal{E}$ for every $u\in U_\omega$.
Then by Lemma~\ref{lem:gen} and Lemma~\ref{lem:w1} we get
\begin{equation}\label{eq:uni}
\mathfrak{A}_\theta(u,u)=\mathfrak{A}_\theta(v,v)\text{, for every }u,v\in U_\omega.
\end{equation}
Recall that $\mathfrak{A}_\theta$ is generated by $\{\iota(x): x\in X\}$,
thus every element of $\mathfrak{A}_\theta$ is the interpretation of a term.
Now assume indirectly that the interpretation of the identity constant $\ide$ 
in $\mathfrak{A}_\theta$ is not an atom, i.e., there is a term $\tau$ such that
its interpretation is a proper, nonempty subset $T$ of $\{(u,u):u\in U_\omega\}$.
Then $\tau\in\mathfrak{A}_\theta(u,u)$ iff $(u,u)\in T$, contradicting to 
\eqref{eq:uni}.
Thus $\mathfrak{A}_\theta$ is an integral algebra.
\end{proof}

Next we show that $\mathfrak{A}_\theta$ is a witness for the non-derivable equations
of the form $\theta\le\theta'$ for our fixed $\theta$.

\begin{lemma}\label{lem:w}
For every term $\tau$ and for every edge $(x,y)\in U_\omega\times U_\omega$,
\begin{equation*}
\tau\in\ell_\omega(x,y)\mbox{ implies }\tau\in\mathfrak{A}_\theta(x,y).
\end{equation*}
\end{lemma}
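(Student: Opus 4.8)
The statement is the exact converse of Lemma~\ref{lem:w1}, and it upgrades Lemma~\ref{lem:gen}: by~\eqref{eq:refl} the claim already holds when $x=y$, and by~\eqref{eq:nrefl} it holds whenever $\tau\not\le\ide$. So the plan is to reduce to the one outstanding case, $\tau\le\ide$ with $x\ne y$, and to rule it out. Since $\ell_\omega(x,y)$ is a filter and $\tau\le\ide$, this case forces $\ide\in\ell_\omega(x,y)$; thus it suffices to establish the general form of Ide, namely that $\ide\in\ell_\omega(x,y)$ implies $x=y$. Once this is known the outstanding case is vacuous and Lemma~\ref{lem:gen} finishes the proof.

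The engine of the argument is that the core of such an edge collapses to the diagonal. By Fun I would write $\ell_\omega(x,y)=\mathcal F(\mathcal E\comp\rho\comp\mathcal E)$; since this filter equals $(\mathcal E\comp\rho\comp\mathcal E)^\uparrow$ and contains $\ide$, there are $\epsilon_1,\epsilon_2\in\mathcal E$ with $\epsilon_1\comp\rho\comp\epsilon_2\le\ide$, and commuting $\epsilon_2$ to the left by~\eqref{eq:int2} and merging by~\eqref{eq:fun} gives $\delta\comp\rho\le\ide$ for the subidentity $\delta=\epsilon_1\meet\epsilon_2\in\mathcal E$. Now $\delta\in\mathcal E=\ell_\omega(x,x)$ is a subidentity, so by Lemma~\ref{lem:gen} and the integrality of $\mathfrak A_\theta$ (Lemma~\ref{lem:int}) its interpretation is the whole diagonal, $\iota(\delta)=\iota(\ide)$. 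Evaluating $\delta\comp\rho\le\ide$ under the valuation $\iota$ in $\mathfrak A_\theta$ then yields $\iota(\rho)=\iota(\delta)\comp\iota(\rho)\subseteq\iota(\ide)$; that is, $\iota(\rho)$ is contained in the diagonal.

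If $\rho\not\le\ide$ this finishes matters cleanly: $\rho\in\ell_\omega(x,y)$ together with~\eqref{eq:nrefl} gives $(x,y)\in\iota(\rho)\subseteq\iota(\ide)$, so $x=y$. The remaining, and genuinely hard, case is $\rho\le\ide$, i.e.\ when the core itself is a subidentity and~\eqref{eq:nrefl} is unavailable. I would attack it by decomposing $(x,y)$ along Gen into a path $x=w_0,\dots,w_k=y$ of witness edges, whose cores $\rho_0,\dots,\rho_{k-1}$ satisfy $\ide\notin\mathcal F(\mathcal E\comp\rho_i\comp\mathcal E)$ (this is Ide for witness edges, checked in the successor step, and forces each $\rho_i\not\le\ide$). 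Iterating Comp, the product $\rho_0\comp\dots\comp\rho_{k-1}$ lies, up to $\mathcal E$-padding, in $\ell_\omega(x,y)$, so as above some subidentity $\delta$ gives $\delta\comp\rho_0\comp\dots\comp\rho_{k-1}\le\ide$; the goal is to deduce $k=0$.

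This last deduction is the main obstacle, and it is exactly why Ide is not maintained as an invariant of the construction but postponed to here: over an arbitrary integral relation algebra two non-subidentity elements can compose to a subidentity (an involution has $\xi\comp\xi=\ide$), so the implication cannot be purely equational. The resolution must use that the signature $(\meet,\comp,0,\ide)$ has no converse, hence no way to build from independent generators a product that is provably $\le\ide$ without some factor being a subidentity modulo $\mathcal E$. Concretely I would argue that $\delta\comp\rho_0\comp\dots\comp\rho_{k-1}\le\ide$, being an inequality of $\free_X$, must hold in every integral relation algebra under every valuation; evaluating the generators as generic, shift-like relations refutes it as soon as $k\ge1$, so the inequality is not derivable from $\mbox{\rm Ax}^i(\meet,\comp,0,\ide)$ unless $k=0$. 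Hence $x=y$, the outstanding case is empty, and the lemma follows.
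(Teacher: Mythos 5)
Your reduction of the lemma to the general form of Ide, i.e.\ to~\eqref{eq:ind2}, is correct, and your treatment of the sub-case where the core $\rho$ of $\ell_\omega(x,y)=\mathcal{F}(\mathcal{E}\comp\rho\comp\mathcal{E})$ satisfies $\rho\not\le\ide$ is sound: it uses only statement~\eqref{eq:nrefl} from the proof of Lemma~\ref{lem:gen}, integrality of $\mathfrak{A}_\theta$ (Lemma~\ref{lem:int}), and soundness of the axioms over $\R^i(\meet,\comp,0,\ide)$, all of which are available without circularity at this point. Up to there you are re-deriving, by a legitimate alternative route, part of what the paper obtains.

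The gap is the remaining sub-case, which you rightly call the main obstacle but then do not prove. To rule out derivability of $\epsilon\comp\rho_0\comp\dots\comp\rho_{k-1}\comp\epsilon\le\ide$ you appeal to ``evaluating the generators as generic, shift-like relations''. This is not an argument: the $\rho_i$ are arbitrary terms, not variables, and under a fixed ``generic'' valuation some $\rho_i$ may evaluate to $\emptyset$ or to a subidentity relation, in which case the inequality \emph{holds} and nothing is refuted. What your plan requires is a single model of $\mbox{\rm Ax}^i(\meet,\comp,0,\ide)$ and a single valuation under which every $\rho_i$ retains an off-diagonal pair and these pairs compose along a path --- that is, exactly the kind of countermodel the entire step-by-step construction exists to produce, so the plan is circular. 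The paper closes this gap differently: it runs a double induction, on the distance $d(x,y)$ given by $\path(x,y)$ and, inside that, on term complexity, and the refuting algebra is $\mathfrak{A}_\theta$ \emph{itself}. The edge $(x,y)$ is peeled into a strictly shorter edge $(x,z)$ and a single witness edge $(z,y)$ (not flattened into a whole path at once, as you do --- flattening discards the recursive structure the induction needs); the induction hypothesis places $\epsilon\comp\rho_1\comp\epsilon\in\mathfrak{A}_\theta(x,z)$ and $\epsilon\comp\rho_2\comp\epsilon\in\mathfrak{A}_\theta(z,y)$, hence $\epsilon\comp\rho_1\comp\rho_2\comp\epsilon\in\mathfrak{A}_\theta(x,y)$ with $x\ne y$; since $\mathfrak{A}_\theta\in\R^i(\meet,\comp,0,\ide)$ by Lemma~\ref{lem:int}, derivability of $\epsilon\comp\rho_1\comp\rho_2\comp\epsilon\le\ide$ would make it valid in $\mathfrak{A}_\theta$, a contradiction. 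Note that this argument needs no case split on whether the core is below $\ide$, so it subsumes your first case as well; the missing idea in your proposal is precisely this bootstrap of $\mathfrak{A}_\theta$ through the path-length induction.
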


\begin{proof}
We already showed the lemma for the case $x=y$ and for the general case 
with the restriction that $\tau\not\le\ide$, see the proof of Lemma~\ref{lem:gen} above. 
Then it would suffice to show that $\ide$ cannot occur in the label of the edge $(x,y)$
whenever $x\ne y$.
Indeed, our proof will show this (see~\eqref{eq:ind2} below),
but we will go through all the cases for the sake of clarity.

We use induction over the ``distance'' between $x$ and $y$.
To this end we define $\path(x,y)$ for $(x,y)\in E_\omega$ by recursion.
For $(x,y)\in E_0$, we let $\path(x,y)=(x,y)$.
Now assume that $(x,y)\in E_{n+1}\smallsetminus E_n$ and
we already defined $\path$ for the elements of $E_n$.
Assume that $U_{n+1}=U_n\cup \{w\}$ and that we expanded $U_n$ by $w$
because of some $\sigma\comp\rho\in\ell_n(u,v)$.
Again we let $\path(x,y)=(x,y)$ for $x,y\in\{u,w,v\}$.
Now assume that $t\ne u$, $(t,u)\in E_n$ and $\path(t,u)=(t,z_0,\dots,z_k,u)$.
Then we let 
\begin{equation}\label{eq:path}
\path(t,w)=(t,z_0,\dots,z_k,u,w)
\end{equation}
and define $\path(w,s)$ for $(v,s)\in E_n$ analogously.
Finally, $d(x,y)$ is defined as the length of $\path(x,y)$.

The base case is when $(x,y)\in W_\omega$ is a witness edge: $d(x,y)=1$.
The base case will be established by induction on terms.
The case of a variable is straightforward by the definition~\eqref{eq:val} 
of the valuation $\iota$.
The case of $0$ follows from the fact that we used proper filters as labels.
Next assume that $\tau$ is the constant $\ide$.
Observe that $\ide\in\ell_\omega(x,y)$ implies $x=y$, 
since irreflexive witness edges avoid $\ide$. 
Then $\ide\in\mathfrak{A}_\theta(x,x)$, since $\mathfrak{A}_\theta$ is representable.
The case for $\meet$ follows from the fact that we used filters as labels.
For the case of $\comp$ assume that $\tau$ is $\sigma\comp\rho$ and
that $\sigma\comp\rho\in\ell_\omega(x,y)$.
Recall that 
we had either $\sigma\in\ell_\omega(x,x)$ and $\rho\in\ell_\omega(x,y)$,
or $\sigma\in\ell_\omega(x,y)$ and $\rho\in\ell_\omega(y,y)$,
or we constructed witness edges
$(x,z)$ and $(z,y)$ such that 
$\sigma\in\ell_\omega(x,z)$ and $\rho\in\ell_\omega(z,y)$.
Thus $\sigma\in\mathfrak{A}_\theta(x,z)$ and $\rho\in\mathfrak{A}_\theta(z,y)$ by the IH,
whence $\sigma\comp\rho\in\mathfrak{A}_\theta(x,y)$ as desired.

For the inductive case assume that $d(x,y)=k+1>1$.
Again we use induction on terms.
The cases for variables and $0$ are as in the base step $(x,y)\in W_\omega$.
For the case of $\ide$ we show that $G_\omega$ in fact satisfies Ide:
\begin{equation}\label{eq:ind2}
x\ne y\mbox{ implies }\ide\notin\ell_\omega(x,y).
\end{equation}
Assume that $(x,y)$ was created in the $(n+1)$th step of the construction.
Recall that during the construction we defined $\ell_{n+1}(x,y)$
as $\mathcal{F}(\ell_{n+1}(x,z)\comp\ell_{n+1}(z,y))$ for some $z$ such that
either $(x,z)\in E_n$ and $(z,y)\in W_{n+1}$, 
or $(x,z)\in W_{n+1}$ and $(z,y)\in E_n$.
Wlog assume the former.
Then, using the notation in Figure~\ref{fig:comp3}, we have $x=t$, $y=w$ and $z=u$.
Recall that $\path(t,w)$ is defined by adding the step $(u,w)$ at
the end of $\path(t,u)$, see~\eqref{eq:path}. 
Hence $d(x,z)=d(t,u) < d(t,w)=d(x,y)$.

Let $\ell_{n+1}(x,z)=\mathcal{F}(\mathcal{E}\comp\rho_1\comp\mathcal{E})$ 
and $\ell_{n+1}(z,y)=\mathcal{F}(\mathcal{E}\comp\rho_2\comp\mathcal{E})$,
whence 
$\ell_{n+1}(x,y)=\mathcal{F}(\mathcal{E}\comp\rho_1\comp\rho_2\comp\mathcal{E})$ 
by the construction.
Since $d(x,z)<d(x,y)$ and $(z,y)\in W_{n+1}$,
we can apply the induction hypothesis:
$\epsilon\comp\rho_1\comp\epsilon\in\mathfrak{A}_\theta(x,z)$ and
$\epsilon\comp\rho_2\comp\epsilon\in\mathfrak{A}_\theta(z,y)$
for every $\epsilon\in\mathcal{E}$.
Thus we get
$\epsilon\comp\rho_1\comp\rho_2\comp\epsilon\in\mathfrak{A}_\theta(x,y)\not\ni\ide$.
Assume, for a contradiction, that $\ide\in\ell_{n+1}(x,y)$.
Then $\epsilon\comp\rho_1\comp\rho_2\comp\epsilon\le\ide$ is derivable from
$\mbox{\rm Ax}^i(\meet,\comp,0,\ide)$ for some $\epsilon\in\mathcal{E}$.
But $\epsilon\comp\rho_1\comp\rho_2\comp\epsilon\le\ide$ is not valid in 
$\R^i(\meet,\comp,0,\ide)$ as witnessed by $\mathfrak{A}_\theta$, a contradiction.
Hence~\eqref{eq:ind2} holds.
Thus we have that $\ide\in\ell_\omega(x,y)$ only if $x=y$, 
and then $\ide\in\mathfrak{A}_\theta(x,y)$ as required.

The case of $\meet$ follows as above.
Finally assume that $\tau=\sigma\comp\rho$ and $\sigma\comp\rho\in\ell_{n}(x,y)$.
We have to consider three cases.
The first is when $\sigma\in\ell_{n}(x,x)$ and $\rho\in\ell_{n}(x,y)$.
Then $\sigma\in\mathfrak{A}_\theta(x,x)$ (since $(x,x)\in W_\omega$)
and $\rho\in\mathfrak{A}_\theta(x,y)$ (since $\rho$ is a simpler term than $\sigma\comp\rho$).
Thus $\sigma\comp\rho\in\mathfrak{A}_\theta(x,y)$.
The case $\sigma\in\ell_{n}(x,y)$ and $\rho\in\ell_{n}(y,y)$ is similar.
Finally, if neither of the above cases apply, then we created witness edges
$(x,z)$ and $(z,y)$ such that 
$\sigma\in\ell_{\omega}(x,z)$ and $\rho\in\ell_{\omega}(z,y)$.
By the base case we get
$\sigma\in\mathfrak{A}_\theta(x,z)$ and $\rho\in\mathfrak{A}_\theta(z,y)$,
whence $\sigma\comp\rho\in\mathfrak{A}_\theta(x,y)$ as desired.
\end{proof}

\begin{lemma}\label{lem:wit}
For every $\theta'$ such that 
$\mbox{\rm Ax}^i(\meet,\comp,0,\ide)\not\vdash \theta\le\theta'$,
we have $\R^i(\meet,\comp,0,\ide)\not\models\theta\le\theta'$.
\end{lemma}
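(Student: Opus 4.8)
The plan is to use the algebra $\mathfrak{A}_\theta$ produced by the step-by-step construction as the required witness. First I would check that the hypothesis $\mbox{\rm Ax}^i(\meet,\comp,0,\ide)\not\vdash\theta\le\theta'$ already secures the standing assumption under which the construction was carried out, namely $\mbox{\rm Ax}^i(\meet,\comp,0,\ide)\not\vdash\theta=0$. Indeed, if $\theta=0$ were derivable, then $\theta\le\theta'$ would be derivable too (from $0\le\theta'$), contradicting the hypothesis. Hence the construction may legitimately be run for this $\theta$, and by Lemma~\ref{lem:int} the resulting $\mathfrak{A}_\theta$ lies in $\R^i(\meet,\comp,0,\ide)$. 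It then suffices to exhibit a single edge at which $\theta$ holds but $\theta'$ fails.

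The designated edge is $(u_0,v_0)$ created in the $0$th step. Recall that $\ell_0(u_0,v_0)=\mathcal{F}(\theta)=\theta^\uparrow$, and that in every successor step only edges incident to the freshly added node $w$ receive new labels, so the label of the old edge $(u_0,v_0)$ is never altered; thus $\ell_\omega(u_0,v_0)=\theta^\uparrow$. Since $\theta\in\theta^\uparrow=\ell_\omega(u_0,v_0)$, Lemma~\ref{lem:w} gives $\theta\in\mathfrak{A}_\theta(u_0,v_0)$, i.e.\ the pair $(u_0,v_0)$ belongs to the interpretation of $\theta$. Conversely, $\free_X\models\theta\le\theta'$ is equivalent to $\mbox{\rm Ax}^i(\meet,\comp,0,\ide)\vdash\theta\le\theta'$, which fails by hypothesis, so $\theta'\notin\theta^\uparrow=\ell_\omega(u_0,v_0)$; by the contrapositive of Lemma~\ref{lem:w1} we obtain $\theta'\notin\mathfrak{A}_\theta(u_0,v_0)$, so $(u_0,v_0)$ is not in the interpretation of $\theta'$. (The two subcases $\theta\le\ide$, giving $u_0=v_0$, and $\theta\not\le\ide$, giving $u_0\ne v_0$, are handled uniformly by Lemma~\ref{lem:w} and Lemma~\ref{lem:w1}, which apply to arbitrary edges.)

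Combining the two halves, the interpretation of $\theta$ in $\mathfrak{A}_\theta$ contains $(u_0,v_0)$ while that of $\theta'$ does not, so $\theta\le\theta'$ fails in $\mathfrak{A}_\theta\in\R^i(\meet,\comp,0,\ide)$, which is precisely $\R^i(\meet,\comp,0,\ide)\not\models\theta\le\theta'$. I expect no genuine obstacle at this final stage, since all the substantive work has already been absorbed into the preceding lemmas; the only points demanding care are the two book-keeping facts used above, namely that the label at $(u_0,v_0)$ is frozen at $\theta^\uparrow$ for the whole construction, and the tight correspondence between membership in a label $\ell_\omega(x,y)$ and membership in the fibre $\mathfrak{A}_\theta(x,y)$ furnished jointly by Lemma~\ref{lem:w} and Lemma~\ref{lem:w1}. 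The deepest of the inputs is the verification of Ide, established through~\eqref{eq:ind2} inside Lemma~\ref{lem:w}, which is exactly what keeps $\ide$ off genuinely irreflexive edges and thereby underpins both the integrality in Lemma~\ref{lem:int} and the separation exhibited here.
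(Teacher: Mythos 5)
Your proposal is correct and follows essentially the same route as the paper: it invokes Lemma~\ref{lem:int} for membership in $\R^i(\meet,\comp,0,\ide)$, uses the initial edge $(u_0,v_0)$ with label $\mathcal{F}(\theta)=\theta^\uparrow$, and separates $\theta$ from $\theta'$ via Lemma~\ref{lem:w} and the contrapositive of Lemma~\ref{lem:w1}. The two book-keeping points you add (that the hypothesis subsumes the standing assumption $\mbox{\rm Ax}^i(\meet,\comp,0,\ide)\not\vdash\theta=0$, and that the label of $(u_0,v_0)$ is never altered in later steps) are correct and are used implicitly in the paper's own proof.
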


\begin{proof}
By Lemma~\ref{lem:int} we have $\mathfrak{A}_\theta\in \R^i(\meet,\comp,0,\ide)$.
By the initial step of the step-by-step construction we have
$\theta\in\ell_\omega(u_0,v_0)=\mathcal{F}(\theta)$ and $\theta'\notin\ell_\omega(u_0,v_0)$.
So by Lemma~\ref{lem:w} we get $\theta\in\mathfrak{A}_\theta(u_0,v_0)$ and
by Lemma~\ref{lem:w1} we have $\theta'\notin\mathfrak{A}_\theta(u_0,v_0)$.
\end{proof}

\begin{remark}[Representing the free algebra]
In the above construction we fixed a term $\theta$ 
and constructed an algebra $\mathfrak{A}_\theta\in\R^i(\meet,\comp,0,\ide)$.
We can repeat the same construction for every non-zero element $\theta$ of the
free algebra $\mathfrak{F}_X$, resulting in 
$\mathfrak{A}_\theta\in\R^i(\meet,\comp,0,\ide)$.
It is not difficult to show that 
$\mathfrak{F}_X$ can be embedded into 
$\prod_{\theta\ne 0}\mathfrak{A}_\theta\in\V(\R^i(\meet,\comp,0,\ide))$.
\end{remark}

\section{Conclusions}

The varieties $\V(\R(\Lambda))$ generated by algebras of binary relations 
of the similarity types
$\Lambda=(\meet,\comp,\ide)$ and $\Lambda=(\join,\meet,\comp,\ide)$ 
were stated to be finitely axiomatizable in \cite{AM-axi-11} 
(Theorem~4.3 and Theorem~4.1(1), respectively), 
but their proofs relied on false lemmas. 
See \cite{AM-err-14}.

In more detail, the third case of Definition~4.6 is ambiguous, 
and Lemmas~4.7 and~4.8 are not true. 
These lemmas are used in the proof of Theorem~4.3. 
As a consequence, the proof of Theorem~4.3 breaks down for the
equation $\ide\meet x\comp y\le x\comp(\ide\meet y\comp x)\comp y$.
This equation is easily seen to be valid, but so far we did not manage
to derive it from the axioms of Theorem~4.3.
In fact, we conjecture that this equation does not follow from the axioms
presented in \cite{AM-axi-11}.
Theorem~4.3 is used in the proof of Theorem~4.1(1). 

Since the main results of this paper, Theorem~\ref{thm:main} and~\ref{thm:dl},
give only a solution for the special case of integral algebras, we state the following as an open problem.

\begin{problem}\label{prob}
Are the varieties generated by algebras of binary relations 
of the similarity types $(\meet,\comp,\ide)$ and $(\join,\meet,\comp,\ide)$ 
finitely axiomatizable?
\end{problem}

In \cite{AMN-equ-11}, we stated that $\V(\Lang(\join,\meet,\comp 0, \ide))$
is finitely axiomatizable by a certain set of axioms, Corollary~3.7.
The proof was based on the theorems of \cite{AM-axi-11} mentioned above,
hence it is not correct.
Luckily Theorem~\ref{thm:lang} above provides a satisfactory solution in this case.

Finally we mention a corollary related to commutative algebras.
Let $\R^c(\Lambda)$ denote the class of relation algebras of signature $\Lambda$
that satisfy the commutativity axiom
\begin{equation}\label{eq:comm}
x\comp y = y\comp x
\end{equation}
for every $x$ and $y$.
It is easy to see that $\V(\R^c(\meet,\comp, 0, \ide))$ satisfies 
$\mbox{\rm Ax}^i(\meet,\comp,0,\ide)$.
Thus our construction can be applied in this case as well.

\begin{corollary}
The varieties $\V(\R^c(\meet,\comp, 0, \ide))$, $\V(\R^c(\join,\meet,\comp, 0, \ide))$
are finitely axiomatized by commutativity~\eqref{eq:comm} and
$\mbox{\rm Ax}(\meet,\comp,0,\ide)$, $\mbox{\rm Ax}(\join,\meet,\comp,0,\ide)$, respectively.
\end{corollary}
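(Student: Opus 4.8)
The plan is to rerun the step-by-step construction of Section~3 over a different free algebra, exploiting that commutativity~\eqref{eq:comm} is stronger than integrality. First I would observe that in any relation algebra the commutativity axiom~\eqref{eq:comm} immediately entails both integral axioms~\eqref{eq:int1} and~\eqref{eq:int2}, since each is a particular instance of $x\comp y=y\comp x$. Hence $\mbox{\rm Ax}(\meet,\comp,0,\ide)$ together with~\eqref{eq:comm} extends $\mbox{\rm Ax}^i(\meet,\comp,0,\ide)$, and soundness is immediate: every axiom of $\mbox{\rm Ax}(\meet,\comp,0,\ide)$ holds in all relation algebras, while~\eqref{eq:comm} holds in $\R^c$ by definition, so all the listed axioms are valid throughout $\V(\R^c(\meet,\comp,0,\ide))$.

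For completeness in the signature $(\meet,\comp,0,\ide)$ I would fix a term $\theta$ that is not provably $0$ over $\mbox{\rm Ax}(\meet,\comp,0,\ide)$ and~\eqref{eq:comm}, and carry out the construction verbatim, but now taking $\free_X$ to be the free algebra of the variety defined by $\mbox{\rm Ax}(\meet,\comp,0,\ide)$ augmented with~\eqref{eq:comm}. Every identity invoked in building the graphs $G_n$ and in checking the inductive conditions and Sat is a consequence of $\mbox{\rm Ax}^i(\meet,\comp,0,\ide)$, hence remains available over the larger theory; so the graphs, the relation algebra $\mathfrak{A}_\theta$, the correspondence Lemmas~\ref{lem:gen}, \ref{lem:w1} and~\ref{lem:w}, and the integrality argument of Lemma~\ref{lem:int} all go through unchanged, yielding an integral $\mathfrak{A}_\theta$ witnessing the failure of every underivable $\theta\le\theta'$.

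The one genuinely new point is that $\mathfrak{A}_\theta$ now lies in $\R^c$, and this is where commutativity of the free algebra is decisive. Since every element of $\mathfrak{A}_\theta$ is $\iota(\mu)$ for some term $\mu$, it suffices to show $\iota(\sigma)\comp\iota(\rho)=\iota(\rho)\comp\iota(\sigma)$ for all terms $\sigma,\rho$. By the definition of the valuation these equal $\iota(\sigma\comp\rho)$ and $\iota(\rho\comp\sigma)$ respectively, and by Lemmas~\ref{lem:w1} and~\ref{lem:w} we have, for every edge $(u,v)$, that $(u,v)\in\iota(\sigma\comp\rho)$ iff $\sigma\comp\rho\in\ell_\omega(u,v)$, and likewise for $\rho\comp\sigma$. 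But $\ell_\omega(u,v)$ is a set of elements of the commutative free algebra $\free_X$, in which $\sigma\comp\rho$ and $\rho\comp\sigma$ denote the same element; so the two membership conditions coincide, forcing $\iota(\sigma\comp\rho)=\iota(\rho\comp\sigma)$. Thus $\mathfrak{A}_\theta\in\R^c(\meet,\comp,0,\ide)$, and the analogue of Lemma~\ref{lem:wit} finishes this signature.

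For the signature with $\join$ I would follow the reduction in the proof of Theorem~\ref{thm:dl}, with the join-free commutative result above replacing Theorem~\ref{thm:main}. Additivity rewrites any inequality as $a_1\join\dots\join a_n\le b_1\join\dots\join b_m$ with join-free $a_i,b_j$, and the required step---that this is valid in $\V(\R^c(\join,\meet,\comp,0,\ide))$ iff some $a_i\le b_j$ is valid---follows directly from the witnesses above: if no $a_i\le b_j$ held, then the algebra $\mathfrak{A}_{a_i}\in\R^c$ built for $\theta=a_i$ would satisfy $a_i$ but no $b_j$ at $(u_0,v_0)$, refuting the join inequality there. The distributive lattice and additivity axioms then assemble derivability of $a\le b$ from the join-free derivations, and symmetrically $b\le a$. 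I expect the only real care-point to be confirming that every identity used in Section~3 is indeed an $\mbox{\rm Ax}^i(\meet,\comp,0,\ide)$-consequence, and, conceptually, reconciling commutativity of $\mathfrak{A}_\theta$ with the fact that relation composition is not commutative in general---the resolution being that here commutativity is imposed by the free algebra rather than by the base, exactly as the argument above shows.
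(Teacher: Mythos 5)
Your soundness direction, the observation that~\eqref{eq:comm} yields both~\eqref{eq:int1} and~\eqref{eq:int2}, and the reduction of the $\join$-signature to the join-free case are all fine. The gap is in the claim that Lemma~\ref{lem:w} ``goes through unchanged'' over the free algebra of $\mbox{\rm Ax}(\meet,\comp,0,\ide)$ together with~\eqref{eq:comm}. That lemma's proof is not purely syntactic: in the inductive case for $\ide$ (the proof of~\eqref{eq:ind2}) it derives a contradiction from ``$\epsilon\comp\rho_1\comp\rho_2\comp\epsilon\le\ide$ is derivable'' by observing that this inequality \emph{fails} in $\mathfrak{A}_\theta$, and this is legitimate only because derivable inequalities are known to hold in $\mathfrak{A}_\theta$. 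In the paper, derivability is from $\mbox{\rm Ax}^i(\meet,\comp,0,\ide)$ and Lemma~\ref{lem:int} has already placed $\mathfrak{A}_\theta$ in $\R^i(\meet,\comp,0,\ide)$, so soundness is available. In your setting, $\ide\in\ell_{n+1}(x,y)$ only gives derivability from the \emph{commutative} theory, and such inequalities need not hold in an algebra that is merely integral: commutativity is strictly stronger than~\eqref{eq:int1} and~\eqref{eq:int2} (the relation algebra of a non-abelian group under its Cayley representation is integral but not commutative). So this step requires $\mathfrak{A}_\theta$ to satisfy~\eqref{eq:comm} --- which is exactly what you prove \emph{from} Lemmas~\ref{lem:w1} and~\ref{lem:w} in your third paragraph. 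The argument is circular: commutativity of $\mathfrak{A}_\theta$ needs the full correspondence $\iota(\tau)=\{(u,v):\tau\in\ell_\omega(u,v)\}$, the correspondence for composite edges needs~\eqref{eq:ind2}, and~\eqref{eq:ind2} (as proved in the paper) needs commutativity of $\mathfrak{A}_\theta$.

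The damage is localised but real. The parts of your argument touching only witness edges survive: $(u_0,v_0)\in W_0$, the base case of Lemma~\ref{lem:w} never appeals to~\eqref{eq:ind2} (irreflexive witness edges avoid $\ide$ by construction), and Lemmas~\ref{lem:gen}, \ref{lem:w1} and~\ref{lem:int} are indeed unaffected; hence your $\mathfrak{A}_\theta$ does refute every underivable $\theta\le\theta'$. What is missing is a non-circular proof that $\mathfrak{A}_\theta$ is commutative, equivalently of Ide for composite edges over the commutative free algebra. This cannot be dismissed as routine: one would need, say, a simultaneous induction establishing~\eqref{eq:ind2}, Lemma~\ref{lem:w} and $\iota(\sigma\comp\rho)=\iota(\rho\comp\sigma)$ together, and no purely order-theoretic shortcut exists, since in a commutative integral algebra $a\comp b\le\ide$ does not force $a\le\ide$ or $b\le\ide$ (think of $\{g\}$ and $\{g^{-1}\}$ in an abelian group). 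To be fair, the paper itself disposes of the corollary with the single remark that ``our construction can be applied in this case as well''; your proposal is the natural unpacking of that remark, but the unpacking shows that precisely the new ingredient --- membership of $\mathfrak{A}_\theta$ in $\R^c$ rather than $\R^i$ --- is the point where the transfer is not automatic and still needs an argument.
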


\paragraph{Acknowledgements}
The author is grateful to Hajnal Andr\'eka, Robin Hirsch and Istv\'an N\'emeti
for helpful comments.

\end{document}